\documentclass[pdflatex,sn-mathphys-num]{sn-jnl}

\usepackage{graphicx}%
\usepackage{multirow}%
\usepackage{amsmath,amssymb,amsfonts}%
\usepackage{amsthm}%
\usepackage{mathrsfs}%
\usepackage{mathbbol}
\usepackage{bbm}
\usepackage[title]{appendix}%
\usepackage{xcolor}%
\usepackage{textcomp}%
\usepackage{manyfoot}%
\usepackage{booktabs}%
\usepackage{algorithm}%
\usepackage{algorithmicx}%
\usepackage{algpseudocode}%
\usepackage{tikz-cd}%
\usepackage{listings}%

\theoremstyle{thmstyleone}%
\newtheorem{theorem}{Theorem}%
\newtheorem{proposition}[theorem]{Proposition}%
\newtheorem{lemma}[theorem]{Lemma}%
\newtheorem{corollary}[theorem]{Corollary}
\newtheorem*{mainthm}{Main Theorem}

\theoremstyle{thmstyletwo}%
\newtheorem{remark}{Remark}%

\theoremstyle{thmstylethree}%
\newtheorem{definition}{Definition}%

\raggedbottom
\begin{document}

\title[Article Title]{Persistent Stiefel–Whitney Classes of Tangent Bundles}
\author*[1]{\fnm{Dongwoo} \sur{Gang}}\email{dongwoo.gang@snu.ac.kr}

\affil*[1]{\orgdiv{Department of Mathematical Sciences}, \orgname{Seoul National University}, \orgaddress{\city{Seoul}, \country{South Korea}}}

\abstract{Stiefel–Whitney classes are topological invariants of vector bundles, and those of the tangent bundle capture essential features of a manifold, such as whether it is orientable and how it can be embedded in Euclidean space. We present an algorithm that computes these classes for the tangent bundle directly from a finite sample of points. Starting from the point cloud, we build a filtration of simplicial complexes and compute its persistent cohomology, and then apply the Wu formula, which recovers the Stiefel–Whitney classes from the cup product and the Steenrod squares alone, without estimating tangent spaces or a smooth structure. The key step, finding the Wu classes, reduces to solving a system of linear equations, so the computation runs in polynomial time in the number of simplices. We prove that whenever the sample recovers the shape of a closed manifold, the computed classes agree with the true Stiefel–Whitney classes of its tangent bundle, and that this remains true even when the data carry spurious topological features on which the Steenrod squares vanish, so the classes can be identified over a wide range of scales rather than only where the sample matches the manifold exactly. We illustrate the method on triangulated four-dimensional manifolds and on point clouds coming from image patches and from a molecular conformation space.}

\keywords{Stiefel–Whitney class, Tangent bundle, Cohomology operations, Wu formula, Topological data analysis}

\maketitle

\section{Introduction}\label{sec1}

The tangent bundle of a smooth manifold is the vector bundle whose fibers are its tangent spaces, and it plays a fundamental role in differential geometry and topology \citep{do2016differential, lee2024introduction}. Stiefel–Whitney classes are the characteristic classes of real vector bundles with $\mathbb{Z}/2$ coefficients. Those of the tangent bundle serve as powerful invariants of the base manifold \citep{milnor1974characteristic, husemoller1966fibre, steenrod1999topology}. For example, a smooth manifold is orientable if and only if the first Stiefel–Whitney class of its tangent bundle vanishes. More generally, higher Stiefel–Whitney classes impose additional obstructions, such as constraints on whether a smooth manifold can be embedded into Euclidean space of a given dimension.

Here is an intuitive example illustrating the significance of the Stiefel–Whitney classes. The torus and the Klein bottle cannot be distinguished by their cohomology groups with coefficients in $ \mathbb{Z}/2$, but the first Stiefel–Whitney class of their tangent bundles distinguishes them. More broadly, every closed surface can be completely classified by its Euler characteristic and the first Stiefel–Whitney class of its tangent bundle.

Topological Data Analysis (TDA) applies algebraic topology to extract topological features from point clouds embedded in high-dimensional spaces \citep{edelsbrunner2002topological, zomorodian2005computing, carlsson2009topology}. A central tool, persistent (co)homology, tracks connected components, loops, and higher-dimensional cycles across multiple scales in a filtration. Although recent work has introduced methods for computing Stiefel–Whitney classes of vector bundles in a TDA setting \citep{tinarrage2022computing, scoccola2023approximate, ren2025persistent}, many aspects remain unexplored. A fundamental challenge in computing characteristic classes in the persistent setting is that vector bundles are traditionally defined over a single CW complex, whereas persistent topology operates on filtrations of simplicial complexes. Recent studies have made progress in bridging this gap.

\citet{tinarrage2022computing} defined and computed \emph{persistent Stiefel–Whitney classes} of line bundles, using a functorial classifying map into a finite projective space, a finite-dimensional model of $\mathbb{RP}^\infty = \mathbf{Gr}_1(\mathbb{R}^\infty)$. Rank-$n$ bundles are instead classified by the Grassmannian $\mathbf{Gr}_n(\mathbb{R}^\infty)$, whose finite-dimensional models are considerably harder to triangulate for $n \geq 2$. Low-dimensional cases have been treated, for instance to compute the second Stiefel–Whitney class of a rank-two bundle over the torus via a classifying map into $\mathbf{Gr}_2(\mathbb{R}^4)$ \citep{tinarrage2021simplicial}.

In an alternative approach, \citet{scoccola2023approximate} computed the first and second Stiefel–Whitney classes from a discrete approximate $O(d)$-valued cocycle obtained by estimating tangent spaces via local PCA \citep{kambhatla1997dimension}. Their method applies to arbitrary vector bundles rather than only tangent bundles and is well-suited to discrete data, but its reliance on geometric estimation makes a natural extension to persistent cohomology less direct. \citet{ren2025persistent} independently introduced persistent Stiefel–Whitney classes through functorial pullback bundles and applied them to embedding obstructions in configuration spaces of hard spheres.

\citet{wu1947topologie} introduced the \emph{Wu formula}, which determines the Stiefel–Whitney classes of the tangent bundle from the cup product and Steenrod squares on the mod-2 cohomology ring of the base manifold, rather than from the bundle itself. Because the cup product and Steenrod squares are functorial, they extend to operations on persistent cohomology \citep{aubrey2011persistent}, which makes Wu's approach available in the persistent setting.

In this work we compute persistent cohomology classes representing the Stiefel–Whitney classes of the tangent bundle of a sampled manifold, working entirely within its persistent cohomology through the Wu formula. We define the \emph{persistent Wu classes} and the \emph{persistent Stiefel–Whitney classes of dimension $n$} of a filtration by the Wu criterion, using only the cup product and the Steenrod squares on the persistent cohomology of the filtration $\mathbb{X}$. Our main theorem shows that this recovery is robust to barcode noise. Even when the filtration carries spurious bars, the persistent Stiefel–Whitney classes still represent those of the tangent bundle, provided these bars are invisible to the relevant Steenrod squares.

\begin{mainthm}[Theorem~\ref{thm:wedge}]
    Let $\mathbb{X} = \{X^r\}_{r \in [s,t]}$ be a filtration of topological spaces with $X^r \simeq M \vee Y^r$ compatibly with the inclusions, where $M$ is a closed smooth $n$-dimensional manifold. Assume that all positive Steenrod squares vanish on the reduced $\mathbb{Z}/2$-cohomology of each noise summand $Y^r$. If $X^t$ is homotopy equivalent to $M$, then for $1\leq k\leq n$ the $k$-th persistent Stiefel–Whitney class of dimension $n$ exists uniquely and represents the $k$-th Stiefel–Whitney class of $TM$.
\end{mainthm}

This hypothesis is natural in random topology. In the sparse regime the spurious homology of a random Čech or Vietoris–Rips complex is generated by small spheres \citep{kahle2011random, bobrowski2018topology}, and heavy-tailed noise produces such cycles in the layered form known as topological crackle \citep{adler2014crackle}. For the circle the random Čech complex is known exactly to be a sphere or a wedge of spheres \citep{lim2024strange}. Every positive Steenrod square vanishes on the reduced cohomology of a sphere. Noise of this form therefore satisfies the cohomological hypothesis of the main theorem.

Beyond this robustness, the classes are recovered exactly whenever the sample is dense and free of noise. In that case the filtration recovers $M$ throughout the interval $[s,t]$, so Proposition~\ref{swequal} guarantees that the persistent Stiefel–Whitney classes exist and represent those of $TM$. Section~\ref{sec:cech} shows that a sufficiently dense sample of $M$ realizes this situation with high probability, in line with the recovery thresholds established for random Čech complexes on manifolds \citep{bobrowski2019random}.

Computing the Wu classes reduces to solving a linear system over $\mathbb{Z}/2$ by Gaussian elimination, rather than searching the exponentially many cohomology classes by brute force. With combinatorial algorithms for the cup product and the Steenrod squares, the whole computation runs in polynomial time in the number of simplices (Section~\ref{sec:algorithm}). The Stiefel–Whitney classes then follow from the Wu formula, and their persistence across the filtration is certified by checking the Wu criterion at the endpoints of the relevant bars.

Because the cup product and the Steenrod squares are functorial and insensitive to spurious features whose Steenrod squares vanish, the persistent Stiefel–Whitney classes can be read off over a wider range of scales than the range on which the filtration recovers the manifold up to homotopy. This tolerance to noise, rather than a computation at one fixed scale, is what the persistent formulation contributes.

We illustrate the method on three computational examples: synthetic triangulations of four-manifolds, point clouds sampled from image patches, and from a molecular conformation space. All Python implementations are available at \href{https://github.com/Dongwoo-Gang/sw-using-wu/tree/master}{https://github.com/Dongwoo-Gang/sw-using-wu/tree/master}.

\subsection*{Outline of the Paper}

The paper begins in Section~\ref{sec:background} with a brief introduction to vector bundle theory and persistence theory. In Section~\ref{sec:perssw}, we define the persistent Wu and Stiefel–Whitney classes of dimension $n$ and prove that they represent the Stiefel–Whitney classes of the tangent bundle when the filtration recovers a manifold, including in the presence of noise on which the Steenrod squares vanish. In Section~\ref{sec:cech}, we show that these classes exist with high probability for a sufficiently dense sample. Section~\ref{sec:algorithm} develops the algorithm and analyzes its complexity. In Section~\ref{sec:examples}, we present the computational examples.

\section{Theoretical background}\label{sec:background}

\subsection{Cohomology operations}
In this subsection, we present the combinatorial definitions of two cohomology operations, the cup product and Steenrod squares, following \citet{aubrey2011persistent}. For their construction and topological significance, see \citet{hatcher2005algebraic}, Chapters 3–4. 

Let $X$ be a simplicial complex whose vertices are totally ordered. For a ring \( R \), let \( C^p(X; R) \) and \( H^p(X; R) \) denote the simplicial \( p \)-cochains and the \( p \)-th simplicial cohomology of \( X \) with coefficients in \( R \). The \emph{cup product} of cochains \( \alpha \in C^p(X; R) \) and \( \beta \in C^q(X; R) \) is the cochain $\alpha \smile \beta \in C^{p+q}(X;R)$ defined on each $(p+q)$-simplex $\sigma=[v_0,\ldots,v_{p+q}]$ of $X$ by
\[
(\alpha \smile \beta)(\sigma) = \alpha([v_0, \ldots, v_p]) \cdot \beta([v_p, \ldots, v_{p+q}]).
\]
This operation descends to cohomology, where it is bilinear, associative, and graded-commutative, $[\alpha] \smile [\beta] = (-1)^{pq}\, [\beta] \smile [\alpha]$, and endows \( H^*(X; R) \) with the structure of a graded-commutative algebra. It is functorial with respect to pullbacks. For a continuous map $f: X \to Y$ and classes $u \in H^p(Y; R)$ and $v \in H^q(Y; R)$, we have $f^*(u \smile v) = f^*u \smile f^*v$.

A \emph{cohomology operation} is a natural transformation between cohomology functors. The \emph{Steenrod squares} are the unique family of cohomology operations $\operatorname{Sq}^i: H^p(X; \mathbb{Z}/2) \to H^{p+i}(X; \mathbb{Z}/2)$, whose existence and uniqueness are classical (\citealt{hatcher2005algebraic}, Section 4.L). They satisfy the following properties.
\begin{itemize}
    \item Naturality. For a map \( f: X \to Y \) and \( \alpha \in H^*(Y; \mathbb{Z}/2) \), $f^* \operatorname{Sq}^k(\alpha) = \operatorname{Sq}^k(f^*\alpha)$.
    \item The operation $\operatorname{Sq}^0$ is the identity.
    \item If $|x| = i$, then $\operatorname{Sq}^i(x) = x^2 = x \smile x$.
    \item If $|x| < i$, then $\operatorname{Sq}^i(x) = 0$.
    \item Cartan formula. $\operatorname{Sq}^k(x \smile y) = \sum_{i+j=k} \operatorname{Sq}^i(x) \smile \operatorname{Sq}^j(y)$.
\end{itemize}

In this work, we adopt an approach for computing Steenrod squares based on the cup-$i$ coproduct, as described in \citet{medina2023new}, and implement it in Algorithms~\ref{alg:cupico} and~\ref{alg:steenrod}.

\begin{definition}[\citealt{medina2023new}, Cup-$i$ Coproduct]\label{cupicoprod}
For a simplicial complex \( X \) and \( i \in \mathbb{N} \cup \{0\} \), let \( \sigma=[0,1,\ldots,p] \in X \). Define the cup-$i$ coproduct as
\[
\Delta_i : C_p(X;\mathbb{Z}/2) \rightarrow \big(C_*(X;\mathbb{Z}/2) \otimes C_*(X;\mathbb{Z}/2)\big)_{p+i},
\]
\[
\Delta_i(\sigma) = \sum_{U \subset \{0,1,\ldots,p\},\, |U|=p-i} d_{U^0}(\sigma) \otimes d_{U^1}(\sigma) \in (C_*(X;\mathbb{Z}/2) \otimes C_*(X;\mathbb{Z}/2))_{p+i},
\]
where each subset is written $U=\{u_1 < \cdots < u_{p-i}\}$, and
\[
U^0=\{u_j\in U\mid u_j+j\equiv 0\pmod 2\},\qquad
U^1=\{u_j\in U\mid u_j+j\equiv 1\pmod 2\}.
\]
Here the position $j$ is indexed starting from $1$. For
\(V=\{v_1<\cdots<v_k\}\subseteq\{0,1,\ldots,p\}\), the face
\(d_V(\sigma)\) is obtained by deleting the vertices in the positions listed
in $V$; explicitly,
\(d_V(\sigma)=[0,\ldots,\widehat{v_1},\ldots,\widehat{v_k},\ldots,p]\).
\end{definition}

\begin{theorem}[\citealt{medina2023new}]\label{newsq}
For \( [\alpha] \in H^p(X;\mathbb{Z}/2) \) and \( k \in \mathbb{N} \cup \{0\} \) with $k \leq p$, we have
\[
\operatorname{Sq}^k([\alpha])=[(\alpha \otimes \alpha) \Delta_{p-k}(\cdot)] \in H^{p+k}(X;\mathbb{Z}/2).
\]

\end{theorem}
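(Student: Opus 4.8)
The plan is to treat the family $\{\Delta_i\}$ of Definition~\ref{cupicoprod} as a combinatorial model for a $\mathbb{Z}/2$-equivariant diagonal approximation and to show that the operation $[\alpha]\mapsto\bigl[(\alpha\otimes\alpha)\Delta_{p-k}(\cdot)\bigr]$ it defines satisfies the characterizing properties of $\operatorname{Sq}^k$ recorded above; since those properties determine the Steenrod squares uniquely, this identifies the two. All chains and cochains are taken over $\mathbb{Z}/2$, so every sign disappears and the transposition $T$ of the two tensor factors enters only through mod-$2$ multiplicities.

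The crux of the argument, and the step I expect to be the main obstacle, is the chain-level identity
\[
\partial\,\Delta_i+\Delta_i\,\partial=\Delta_{i-1}+T\,\Delta_{i-1},\qquad \Delta_{-1}:=0,
\]
which expresses that $\sum_i e_i\otimes\Delta_i$ is an equivariant chain map from the standard free resolution of $\mathbb{Z}/2$ over $\mathbb{Z}/2[\mathbb{Z}/2]$ into $C_*(X)\otimes C_*(X)$ lifting the diagonal. I would prove it by direct bookkeeping on the defining sum $\Delta_i(\sigma)=\sum_{|U|=p-i}d_{U^0}(\sigma)\otimes d_{U^1}(\sigma)$: applying the Leibniz rule to $\partial\bigl(d_{U^0}(\sigma)\otimes d_{U^1}(\sigma)\bigr)$ produces additional vertex deletions that must be matched term by term against those arising in $\Delta_i(\partial\sigma)$. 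The delicate point is that deleting a vertex $v\notin U$ shifts the positions of the elements of $U$, and hence can move an element between the even-indexed block $U^0$ and the odd-indexed block $U^1$; tracking this parity change is what makes the interior deletions cancel in pairs mod $2$ while leaving precisely the two surviving terms $\Delta_{i-1}$ and $T\Delta_{i-1}$.

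Granting this identity, the remaining steps are formal. Dualizing and using that $\alpha$ is a cocycle, I compute
\[
\delta\bigl[(\alpha\otimes\alpha)\Delta_i\bigr]=(\alpha\otimes\alpha)\Delta_i\,\partial=(\alpha\otimes\alpha)(1+T)\Delta_{i-1}+(\delta\alpha\otimes\alpha+\alpha\otimes\delta\alpha)\Delta_i,
\]
whose right-hand side vanishes because $(\alpha\otimes\alpha)(1+T)=(\alpha\otimes\alpha)+(\alpha\otimes\alpha)=0$ mod $2$ and $\delta\alpha=0$; hence $(\alpha\otimes\alpha)\Delta_{p-k}$ is a cocycle. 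A parallel computation shows that replacing $\alpha$ by $\alpha+\delta\beta$ alters it by a coboundary, so its class depends only on $[\alpha]$, and naturality is immediate since each $\Delta_i$ is defined simplicially and therefore commutes with simplicial maps.

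Finally I verify the normalizations and conclude. For a $p$-simplex $\sigma$ the only subset with $|U|=p-p=0$ is $U=\emptyset$, so $\Delta_p(\sigma)=\sigma\otimes\sigma$ and $\operatorname{Sq}^0([\alpha])(\sigma)=\alpha(\sigma)^2=\alpha(\sigma)$, giving $\operatorname{Sq}^0=\operatorname{id}$; the index-zero term $\Delta_0$ is itself a chain map by the identity above, hence a diagonal approximation inducing the cup product, so for $k=p$ one gets $\operatorname{Sq}^p([\alpha])=[\alpha]\smile[\alpha]$; and since $\Delta_{p-k}$ is defined only for $p-k\ge 0$, the convention $\Delta_{<0}=0$ yields the vanishing below the top degree. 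It then remains either to check the Cartan formula directly or, more economically, to invoke the acyclic-models comparison theorem: any two equivariant diagonal approximations are equivariantly chain homotopic, so $[(\alpha\otimes\alpha)\Delta_{p-k}]$ is independent of the chosen model and agrees with the class produced by the model defining the axiomatic $\operatorname{Sq}^k$. Either route, combined with the uniqueness of the Steenrod squares already recorded, yields $\operatorname{Sq}^k([\alpha])=\bigl[(\alpha\otimes\alpha)\Delta_{p-k}(\cdot)\bigr]$ for all $0\le k\le p$.
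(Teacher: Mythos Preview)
The paper does not prove this theorem at all; it is quoted verbatim from \citet{medina2023new} as an external input, so there is no in-paper argument for you to be compared against.

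Evaluated on its own terms, your outline is the correct strategy and matches how such results are established in the literature: one shows that the family $\{\Delta_i\}$ satisfies the boundary identity $\partial\Delta_i+\Delta_i\partial=(1+T)\Delta_{i-1}$ over $\mathbb{Z}/2$, which makes it a $\mathbb{Z}/2$-equivariant diagonal approximation, and then either checks the axioms or invokes acyclic models to identify the resulting operations with $\operatorname{Sq}^k$. The formal consequences you draw from the identity---well-definedness on cohomology, naturality, the normalizations $\operatorname{Sq}^0=\mathrm{id}$ and $\operatorname{Sq}^p=[\alpha]\smile[\alpha]$---are all correct.

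The one genuine gap is that you do not actually prove the boundary identity; you describe what the bookkeeping would involve and correctly flag the parity shift of $U^0$ versus $U^1$ under vertex deletion as the delicate point, but you stop short of the matching. For the specific formula of Definition~\ref{cupicoprod}, which is \emph{not} Steenrod's original cup-$i$ formula but a new closed-form expression, that combinatorial verification is nontrivial and is in fact the main technical contribution of \citet{medina2023new}. So your proposal reduces the theorem to exactly the lemma whose proof constitutes the cited paper, which is honest but leaves the real work undone.
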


\subsection{Vector bundle and Stiefel–Whitney class}

In this subsection, we follow \citet{milnor1974characteristic}, Sections 4–8. A \emph{vector bundle} \( \xi = (E, B) \) of rank $n$ consists of a topological space \( E \) called the \emph{total space}, a \emph{base space} \( B \), and a continuous projection \( \pi: E \to B \), such that each fiber \( \pi^{-1}(b) \) is an $n$-dimensional vector space. Locally, for each \( b \in B \), there exists an open neighborhood \( U \subset B \) of $b$ and a homeomorphism $\phi: \pi^{-1}(U) \to U \times \mathbb{R}^n$
such that the restriction of \( \pi \) to \( \pi^{-1}(U) \) corresponds to the projection onto \( U \). A vector bundle is called a \emph{trivial bundle} or \emph{product bundle} if there exists a global homeomorphism \( E \cong B \times \mathbb{R}^n \) that respects the projection \( \pi \).

A \emph{bundle map} between two vector bundles \( \xi_1 = (E_1, B_1) \) and \( \xi_2 = (E_2,B_2) \) is a continuous map $G: E_1 \to E_2$ that maps each fiber \( \pi^{-1}(b) \) of \( \xi_1 \) to the corresponding fiber \( \pi^{-1}(b') \) of \( \xi_2 \) via a linear isomorphism. The induced map \( \hat{G}: B_1 \to B_2 \) describes how base points are mapped. Two vector bundles \( \xi_1 \) and \( \xi_2 \) are \emph{isomorphic} if there exists a homeomorphism \( G: E_1 \to E_2 \) that is a bundle map.

For two vector bundles $\xi_1 = (E_1,B)$ and $\xi_2 = (E_2,B)$ over a common base space $B$ with projections $\pi_1$ and $\pi_2$, their \emph{Whitney sum} $\xi_1 \oplus \xi_2$ is the vector bundle over $B$ whose fiber over each $b \in B$ is the direct sum $\pi_1^{-1}(b) \oplus \pi_2^{-1}(b)$ of the corresponding fibers.

The \emph{Möbius bundle} $\gamma_1^1 = (M,S^1)$ is defined as
$$M = \{(x,y) \in [0,1]\times\mathbb{R} \mid (0,y) \sim (1,-y)\},\quad S^1 = [0,1]/\{0 \sim 1\}, $$
where the projection map is defined as the projection onto the first coordinate. Up to isomorphism, this is the unique nontrivial rank-one vector bundle over the circle.

A key tool for studying vector bundles is the concept of \emph{characteristic classes}, which capture topological obstructions to a bundle being trivial (\citealt{steenrod1999topology}, Part 3). An important characteristic class is the Stiefel–Whitney class. For a vector bundle $\xi=(E,B)$ of finite rank, we define the \emph{Stiefel–Whitney classes} $\{w_k(\xi)\}_{k \in \mathbb{N} \cup \{0\}}$ of $\xi$ as the cohomology classes of $B$ with coefficients in $\mathbb{Z}/2$ that satisfy the following axioms:
\begin{itemize}
    \item The class $w_0(\xi) = 1 \in H^0(B;\mathbb{Z}/2)$ is the unit of the cohomology ring $H^*(B;\mathbb{Z}/2)$ for every vector bundle $\xi$, and $w_k(\xi) = 0$ whenever $\operatorname{rank}(\xi) < k$.
    \item A bundle map $f: E_1 \to E_2$ between vector bundles $\xi_1=(E_1,B_1)$ and $\xi_2=(E_2,B_2)$ satisfies $\hat{f}^*w_i(\xi_2)=w_i(\xi_1)$.
    \item Writing $w(\xi)=\sum_{i \geq 0} w_i(\xi) \in H^*(B;\mathbb{Z}/2)$ for the total class, the identity $w(\xi \oplus \eta)=w(\xi) \smile w(\eta)$ holds for any bundle $\eta$ over $B$.
    \item The Möbius bundle $\gamma_1^1$ over $S^1$ has $w_1(\gamma_1^1) \neq 0$.
\end{itemize}
Such classes exist and are uniquely determined by these axioms (\citealt{milnor1974characteristic}, Section 4). We call $w_k(\xi) \in H^k(B;\mathbb{Z}/2)$ the \emph{$k$-th Stiefel–Whitney class} of the vector bundle $\xi$. For a smooth manifold $M$, the \emph{$k$-th Stiefel–Whitney class of \( M \)} refers to the $k$-th Stiefel–Whitney class of its tangent bundle \( TM \), denoted $w_k(M)$.

\subsection{Wu formula}\label{wu}
In this subsection, we follow \citet{milnor1974characteristic}, Section 11, and study the Stiefel–Whitney classes of $M$. 

Let \( M \) be a closed \( n \)-dimensional smooth manifold. By Poincaré duality the cup-product pairing $H^k(M;\mathbb{Z}/2) \times H^{n-k}(M;\mathbb{Z}/2) \to H^n(M;\mathbb{Z}/2) \cong \mathbb{Z}/2$ is nondegenerate, so for each \( 0 \leq k \leq n \) there exists a unique cohomology class \( v_k \in H^k(M;\mathbb{Z}/2) \) satisfying
\[
v_k \smile x = \operatorname{Sq}^k(x)
\]
for every \( x \in H^{n-k}(M;\mathbb{Z}/2) \) (\citealt{milnor1974characteristic}, Section 11). This class \( v_k \) is called the \( k \)-th \emph{Wu class} of \( M \). It is the class dual to \( \operatorname{Sq}^k \) with respect to the cup-product pairing, and the defining condition is referred to as the \emph{Wu criterion}. Since the cup product and the Steenrod squares are linear over $\mathbb{Z}/2$, the criterion holds for every $x \in H^{n-k}(M;\mathbb{Z}/2)$ as soon as it holds on a basis, so it suffices to verify it on a basis of $H^{n-k}(M;\mathbb{Z}/2)$.

The \( k \)-th Stiefel–Whitney class of \( M \) is given by
\begin{equation}\label{wuformula}
    w_k = \sum_{i+j=k}\operatorname{Sq}^i(v_j).
\end{equation}
This equation, known as the \emph{Wu formula}, expresses the Stiefel–Whitney classes of \( M \) in terms of Wu classes and Steenrod squares. Since the cup product, the Steenrod squares, and Poincaré duality are homotopy invariants of the closed manifold \( M \), the Wu classes and the Stiefel–Whitney classes $w_k=\sum_{i+j=k}\operatorname{Sq}^i(v_j)$ they determine depend only on the homotopy type of \( M \). In particular, the Stiefel–Whitney classes of a closed manifold are topological invariants, independent of any smooth structure.

More generally, we call a space \( X \) a \emph{mod-2 Poincaré duality space of dimension \( n \)} if $H^n(X;\mathbb{Z}/2) \cong \mathbb{Z}/2$ and the cup-product pairing $H^k(X;\mathbb{Z}/2) \times H^{n-k}(X;\mathbb{Z}/2) \to H^n(X;\mathbb{Z}/2)$ is nondegenerate for every \( k \). Closed manifolds have this property by Poincaré duality (\citealt{hatcher2005algebraic}, Section 3.3), and so does every space homotopy equivalent to one, since the pairing depends only on the cohomology ring.

\begin{lemma}\label{hptyeq}
    Assume that a map $f:X\to Y$ is a homotopy equivalence, and that \( X \) is a closed $n$-dimensional smooth manifold. Then, there exists a unique class \( v_k \in H^k(Y;\mathbb{Z}/2) \) satisfying
    \[
        v_k \smile y = \operatorname{Sq}^k(y)
    \]
    for every \( y \in H^{n-k}(Y;\mathbb{Z}/2) \). Moreover, \( f^* v_k \) is the \( k \)-th Wu class of \( X \), and
    \[
        f^* \left[\sum_{i+j=k} \operatorname{Sq}^i(v_j)\right]
    \]
    is the \( k \)-th Stiefel–Whitney class $w_k(X)$ of \( X \).

    \begin{proof}
        Since \( f \) is a homotopy equivalence, the induced homomorphism \( f^*:H^*(Y; \mathbb{Z}/2) \to H^*(X; \mathbb{Z}/2) \) is an isomorphism of graded-commutative algebras that preserves the cohomology ring structure. Therefore, there exists a unique class \( v_k \in H^k(Y; \mathbb{Z}/2) \) satisfying the Wu criterion for the given dimension $n$, and the isomorphism $f^*$ maps $v_k$ to the $k$-th Wu class of $X$.

        Denote \( w_k(X) \) as the \( k \)-th Stiefel–Whitney class of \( X \). Then by the Wu formula~\ref{wuformula}, we have
        \[
            w_k(X) = \sum_{i+j=k} \operatorname{Sq}^i(f^* v_j) = \sum_{i+j=k} f^*(\operatorname{Sq}^i(v_j)) = f^* \left[\sum_{i+j=k} \operatorname{Sq}^i(v_j) \right].
        \]
    \end{proof}
\end{lemma}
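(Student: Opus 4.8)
The plan is to treat this as a transport-of-structure statement: the Wu classes and the Wu formula live naturally on the manifold $X$, and since $f$ is a homotopy equivalence all the relevant algebra --- $\mathbb{Z}/2$-cohomology, the cup product, and the Steenrod squares --- is carried isomorphically between $X$ and $Y$ along $f$. First I would isolate the one structural input everything rests on. Because $f$ is a homotopy equivalence, $f^{*}\colon H^{*}(Y;\mathbb{Z}/2)\to H^{*}(X;\mathbb{Z}/2)$ is a graded isomorphism; it is multiplicative for the cup product by the functoriality recalled in Section~\ref{sec:background}; and it commutes with each $\operatorname{Sq}^{k}$ by the naturality axiom. Hence the inverse $(f^{*})^{-1}$ is again a ring isomorphism commuting with all Steenrod squares.

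For existence, I would take the Wu class $v_{k}^{X}\in H^{k}(X;\mathbb{Z}/2)$ of the manifold $X$, which exists and is unique by the discussion in Section~\ref{wu} (see \citet{milnor1974characteristic}, Section~11), and define $v_{k}:=(f^{*})^{-1}\bigl(v_{k}^{X}\bigr)\in H^{k}(Y;\mathbb{Z}/2)$. To verify the Wu criterion on $Y$, I would take an arbitrary $y\in H^{n-k}(Y;\mathbb{Z}/2)$ and push it forward: $f^{*}(v_{k}\smile y)=f^{*}v_{k}\smile f^{*}y=v_{k}^{X}\smile f^{*}y=\operatorname{Sq}^{k}(f^{*}y)=f^{*}(\operatorname{Sq}^{k}y)$, where the middle equality is the Wu criterion for $X$ applied to $f^{*}y\in H^{n-k}(X;\mathbb{Z}/2)$; injectivity of $f^{*}$ then yields $v_{k}\smile y=\operatorname{Sq}^{k}(y)$. (If one prefers, Lemma~\ref{lemma:basis} allows checking this only on a basis of $H^{n-k}(Y;\mathbb{Z}/2)$.) For uniqueness, any $v_{k}'$ satisfying the criterion on $Y$ has $f^{*}v_{k}'$ satisfying it on $X$ --- run the same computation with $x=f^{*}y$ ranging over all of $H^{n-k}(X;\mathbb{Z}/2)$, using surjectivity of $f^{*}$ --- so $f^{*}v_{k}'=v_{k}^{X}=f^{*}v_{k}$, and injectivity gives $v_{k}'=v_{k}$. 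This computation also establishes $f^{*}v_{k}=v_{k}^{X}$, i.e.\ that $f^{*}v_{k}$ is the $k$-th Wu class of $X$.

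For the last claim I would substitute $v_{j}^{X}=f^{*}v_{j}$ into the Wu formula~\ref{wuformula} for $X$ and pull $f^{*}$ outside the sum using naturality of the squares and $\mathbb{Z}/2$-linearity: $w_{k}(X)=\sum_{i+j=k}\operatorname{Sq}^{i}(v_{j}^{X})=\sum_{i+j=k}\operatorname{Sq}^{i}(f^{*}v_{j})=\sum_{i+j=k}f^{*}\bigl(\operatorname{Sq}^{i}v_{j}\bigr)=f^{*}\bigl[\,\sum_{i+j=k}\operatorname{Sq}^{i}(v_{j})\,\bigr]$.

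I do not expect a genuine technical obstacle; the argument is bookkeeping with functorial operations. The one point needing care is conceptual: $Y$ is not assumed to be a manifold, so it carries no intrinsic Wu class and Poincaré duality is unavailable on $Y$ --- the class $v_{k}$ must be defined by the transport above and characterized purely by the Wu criterion, and the statement should be read in that sense. Relatedly, it is worth stating explicitly that the existence and uniqueness on the manifold side is exactly the fact quoted in Section~\ref{wu}, so that the lemma reduces honestly to that fact together with the naturality of $\smile$ and $\operatorname{Sq}^{\bullet}$.
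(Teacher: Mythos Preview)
Your proposal is correct and follows essentially the same approach as the paper: both rely on the fact that $f^{*}$ is an isomorphism of graded $\mathbb{Z}/2$-algebras commuting with Steenrod squares, transport the Wu class from $X$ to $Y$ along this isomorphism, and then apply the Wu formula together with naturality of $\operatorname{Sq}^{i}$ to obtain the Stiefel--Whitney class identity. Your version is more explicit about the existence and uniqueness verification (constructing $v_k=(f^*)^{-1}(v_k^X)$ and checking the criterion directly), whereas the paper simply asserts these follow from $f^*$ being a ring isomorphism.
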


\begin{remark}
    Thus, when a space $Y$ is homotopy equivalent to a closed smooth manifold, its Wu and Stiefel–Whitney classes are determined by its mod-2 cohomology ring together with the Steenrod square operations. This is the basis of Algorithm~\ref{alg:sw_fixed}.
\end{remark}

\begin{definition}\label{wuswtype}
    For a given topological space \( X \) and integers \( 0 \leq k \leq n \), we define a \emph{$k$-th Wu class of dimension \( n \)} of \( X \) as a class \( v_{(k,n)} \) such that $v_{(k,n)} \smile x = \operatorname{Sq}^k(x)$ holds for every \( x \in H^{n-k}(X; \mathbb{Z}/2) \). For a general $X$ such a class need not exist or be unique; Proposition~\ref{prop:wusolve} characterizes its existence and uniqueness through the nondegeneracy of the cup-product pairing. When every $\{v_{(k,n)}\}_{0 \leq k \leq n}$ exists, we define a \emph{$k$-th Stiefel–Whitney class of dimension $n$} as $w_{(k,n)} = \sum_{i+j=k}\operatorname{Sq}^i(v_{(j,n)}).$

\end{definition}

\subsection{Persistence theory}\label{persistencetheory}

In this subsection, we follow \citet{contessoto2022cuplength}, with several modifications. We define a \emph{filtration of spaces} (or \emph{filtration}) to be a collection of topological spaces
\[
\mathbb{X} = \{X^r\}_{r \in \mathbb{R}}
\]
equipped with inclusion maps $\iota_r^{r'} : X^r \to X^{r'}$ for $r \leq r'$, satisfying $\iota_{r'}^{r''} \circ \iota_r^{r'} = \iota_r^{r''}$ for every $r \leq r' \leq r''$. 

Let \( (\mathbb{R},\leq) \) be a category having real numbers as objects, with a unique morphism \( r \to r' \) if and only if \( r \leq r' \), and let $\mathbf{Top}$ denote the category of compactly generated weak Hausdorff topological spaces. Then a filtration $\mathbb{X}$ can be viewed as a functor
\[
\mathbb{X} : (\mathbb{R},\leq) \to \mathbf{Top},\quad r \mapsto X^r,\quad (r,r')\mapsto (\iota_r^{r'} :X^r \to X^{r'})
\]
where each morphism is mapped to an inclusion map. For each $X^r$, we call the parameter $r$ a \emph{filtration scale}. If $\mathbb{X}$ is defined only for $r \in [s,t]$, we refer to $[s,t]$ as a \emph{filtration interval}. In this case, $\mathbb{X}$ is a functor defined on $([s,t],\leq)$, which is a full subcategory of \( (\mathbb{R},\leq) \) whose objects are the real numbers $r$ in $[s,t]$.

For a fixed field \( \mathbb{k} \), let $\mathbf{Vec}_\mathbb{k}$ denote the category whose objects are the $\mathbb{k}$-vector spaces and whose morphisms are \( \mathbb{k} \)-linear maps. The \emph{\( p \)-th persistent (singular) cohomology} \( H^p(\mathbb{X};\mathbb{k}) \) is defined as a functor
\[
H^p(\mathbb{X};\mathbb{k}) : ([s,t],\leq)^{\operatorname{op}} \to \mathbf{Vec}_\mathbb{k},
\]
where
\[
r \mapsto H^p(X^r;\mathbb{k}), \quad (r \leq r') \mapsto (\iota_r^{r'*} : H^p(X^{r'};\mathbb{k}) \to H^p(X^r;\mathbb{k})).
\]
Equivalently, it can be viewed as the composition of functors:
\[
H^p(\mathbb{X};\mathbb{k}) : ([s,t],\leq)^{\operatorname{op}} \xrightarrow{\mathbb{X}^{\operatorname{op}}} \mathbf{Top}^{\operatorname{op}} \xrightarrow{H^p(\cdot;\mathbb{k})} \mathbf{Vec}_\mathbb{k}.
\]
If each $H^p(X^r;\mathbb{Z}/2)$ is finite dimensional, the persistent cohomology decomposes as a direct sum of interval modules (\citealt{crawley2015decomposition}). The collection of intervals that represent the birth and death of each linearly independent cohomology class is called a \emph{persistent barcode}, and an individual interval is a \emph{bar}.

We define a \emph{\( p \)-th persistent cohomology class} of \( \mathbb{X} \) over a filtration interval \( [s,t] \) to be a family \( \mathbb{x}=\{x^r\}_{s\leq r\leq t} \) with \( x^r \in H^p(X^r;\mathbb{k}) \) such that $\iota_{r}^{r'*} x^{r'}= x^{r}$ for every \( s \leq r \leq r' \leq t \).
By the functoriality of the cup product, the cohomology ring functor induces
\[
H^*(\cdot; \mathbb{k}) : \mathbf{Top}^{\operatorname{op}} \to \mathbf{GrAlg}_\mathbb{k},
\]
where $\mathbf{GrAlg}_\mathbb{k}$ denotes the category of graded-commutative algebras over \( \mathbb{k} \). Consequently, we obtain the composition
\[
H^*(\mathbb{X};\mathbb{k}) : ([s,t],\leq)^{\operatorname{op}} \xrightarrow{{\mathbb{X}}^{\operatorname{op}}} \mathbf{Top}^{\operatorname{op}} \xrightarrow{H^*(\cdot;\mathbb{k})} \mathbf{GrAlg}_\mathbb{k}
\]
which is called a \emph{persistent graded-commutative algebra over $\mathbb{k}$}.

\section{Persistent Stiefel–Whitney class}\label{sec:perssw}

Throughout this section (co)homology is taken with $\mathbb{Z}/2$ coefficients, and
$\mathbb{X}=\{X^r\}_{r\in[s,t]}$ denotes a filtration of spaces in the sense of
Section~\ref{persistencetheory}. We introduce persistent analogues of the Wu and
Stiefel–Whitney classes, defined through the cup product and the Steenrod squares on the
persistent cohomology of $\mathbb{X}$. Our main results show that if the space $X^t$ at the top of
the filtration is homotopy equivalent to a smooth closed manifold $M$, then, under a compatibility
hypothesis on the filtration made precise below, these classes exist and represent the
Stiefel–Whitney classes of $M$.

\subsection{Persistent Wu formula}
\begin{definition}\label{perswu}
    Let $\mathbb{X} = \{X^r\}_{r \in [s,t]}$ be a filtration of spaces defined on $[s,t]$, and let $0 \leq k \leq n$. A persistent cohomology class
    \[
    \mathbb{v}_{(k,n)} = \{v_{(k,n)}^r\}_{r \in [s,t]} \in H^k(\mathbb{X};\mathbb{Z}/2)
    \]
    is called the \emph{$k$-th persistent Wu class of dimension $n$} if it satisfies the following two conditions.
    \begin{enumerate}
        \item[(i)] \emph{(Persistent Wu criterion)} For every $r \in [s,t]$ and every $x \in H^{n-k}(X^r;\mathbb{Z}/2)$,
        \[
        v_{(k,n)}^r \smile x = \operatorname{Sq}^k(x).
        \]
        \item[(ii)] $\mathbb{v}_{(k,n)}$ is the unique persistent cohomology class satisfying~(i).
    \end{enumerate}
    At an individual scale $r$, the equation in~(i) may admit several solutions $v^r \in H^k(X^r;\mathbb{Z}/2)$; condition~(ii) requires uniqueness only among the functorially compatible families $\{v^r\}_{r \in [s,t]}$ (those with $\iota_r^{r'*}v^{r'} = v^r$), not at each scale separately.
\end{definition}

\begin{remark}\label{endpoint}
    Although Definition~\ref{perswu}(i) is stated for every scale and every class, it need only be
    verified finitely often. Let $\{[s_i,t_i]\}_i$ be the bars of $H^{n-k}(\mathbb{X};\mathbb{Z}/2)$ and
    let $\mathbb{x}_i=\{x_i^r\}_r$ be a persistent class representing the $i$-th bar, so that the values
    $x_i^r$ of the bars alive at $r$ form a basis of $H^{n-k}(X^r)$. Since the cup product and the
    Steenrod squares commute with the induced maps $\iota_r^{r'*}$, and $\mathbb{v}_{(k,n)}$ and
    $\mathbb{x}_i$ are compatible under them, the equality $v_{(k,n)}^r\smile x_i^r=\operatorname{Sq}^k(x_i^r)$
    at one scale of a bar propagates to every lower scale. By linearity, verifying it for each
    $\mathbb{x}_i$ at the single scale $r=\min(t,t_i)$ then establishes the criterion for all
    $r\in[s,t]$ and all $x\in H^{n-k}(X^r)$, underlying Algorithm~\ref{alg:pers_sw}.
\end{remark}

\begin{definition}\label{persswtype}
    Suppose the persistent Wu classes $\mathbb{v}_{(0,n)},\ldots,\mathbb{v}_{(k,n)}$ of dimension $n$ all exist. The \emph{$k$-th persistent Stiefel–Whitney class of dimension $n$} is defined by the \emph{persistent Wu formula}
    \[
    \mathbb{w}_{(k,n)}= \{w_{(k,n)}^r\}_{r \in [s,t]}, \qquad w_{(k,n)}^r = \sum_{i+j=k}\operatorname{Sq}^{i}\big(v_{(j,n)}^r\big).
    \]
    Because each $\mathbb{v}_{(j,n)}$ is a persistent cohomology class and the Steenrod squares commute with the induced maps $\iota_r^{r'*}$, the family $\mathbb{w}_{(k,n)}$ is again a persistent cohomology class.
\end{definition}

\begin{proposition}\label{swequal}
Let $\mathbb{X}=\{X^r\}_{r\in[s,t]}$ be a filtration with a homotopy equivalence $f\colon X^t\to M$
onto a closed smooth $n$-dimensional manifold $M$, and suppose $\iota_r^{t*}\colon H^*(X^t)\to H^*(X^r)$
is an isomorphism for every $r\in[s,t]$. Then for each $1\le k\le n$ the persistent Wu and
Stiefel–Whitney classes of dimension $n$ exist on $[s,t]$, with $w_{(k,n)}^r=\iota_r^{t*}f^*w_k(M)$.
In particular $\mathbb{w}_{(k,n)}$ represents $w_k(M)$.
\begin{proof}
Let $g\colon M\to X^t$ be a homotopy inverse of $f$. By Lemma~\ref{hptyeq} applied to $g$, for each
$k$ the space $X^t$ carries a unique Wu class $v_{(k,n)}^t\in H^k(X^t)$, and
$w_{(k,n)}^t:=\sum_{i+j=k}\operatorname{Sq}^i(v_{(j,n)}^t)$ satisfies $g^*w_{(k,n)}^t=w_k(M)$, hence
$w_{(k,n)}^t=f^*w_k(M)$. Set $v_{(k,n)}^r:=\iota_r^{t*}v_{(k,n)}^t$. Since $\iota_r^{t*}$ is a ring
isomorphism commuting with the Steenrod squares, every $x\in H^{n-k}(X^r)$ equals $\iota_r^{t*}\tilde x$
for a unique $\tilde x$, so
\[
v_{(k,n)}^r\smile x=\iota_r^{t*}\big(v_{(k,n)}^t\smile\tilde x\big)=\iota_r^{t*}\operatorname{Sq}^k(\tilde x)=\operatorname{Sq}^k(x).
\]
Hence $\{v_{(k,n)}^r\}_{r\in[s,t]}$ is the persistent Wu class of dimension $n$, and the persistent Wu
formula gives $w_{(k,n)}^r=\iota_r^{t*}w_{(k,n)}^t=\iota_r^{t*}f^*w_k(M)$.
\end{proof}
\end{proposition}

Proposition~\ref{swequal} requires the induced maps $\iota_r^{t*}$ to be isomorphisms. This condition holds, in particular, on an interval over which the filtration inclusions are homotopy equivalences. We now allow noise across the interval, modelled as a wedge summand, and show that the
Stiefel–Whitney information of $M$ is recovered on a wider range of scales, as long as the noise is
invisible to the relevant Steenrod squares.

\begin{theorem}\label{thm:wedge}
Let $\mathbb{X}=\{X^r\}_{r\in[s,t]}$ be a filtration and $M$ a closed smooth $n$-dimensional
manifold. Suppose each $X^r\simeq M\vee Y^r$, with retraction $\rho_r\colon X^r\to M$ collapsing
$Y^r$, and that these are compatible with the inclusions in that $\rho_{r'}\circ\iota_r^{r'}\simeq\rho_r$
for $r\le r'$. Suppose further that $\operatorname{Sq}^i=0$ on $\widetilde H^*(Y^r;\mathbb{Z}/2)$ for
every $i>0$ and every $r$. Put $v_k^r:=\rho_r^*v_k(M)$, where $v_k(M)\in H^k(M;\mathbb{Z}/2)$ is the
$k$-th Wu class of $M$.
\begin{enumerate}
\item[(1)] For each $1\le k\le n$, the persistent cohomology class $\mathbb{v}_k=\{v_k^r\}_{r\in[s,t]}$
satisfies the persistent Wu criterion of Definition~\ref{perswu}. In general it need not be the
only persistent cohomology class that does so.
\item[(2)] If in addition $X^t\simeq M$, then $\mathbb{v}_k$ is the unique such class, so the $k$-th
persistent Wu class $\mathbb{v}_{(k,n)}$ of dimension $n$ exists and equals $\mathbb{v}_k$.
Consequently the persistent Wu formula gives $\mathbb{w}_{(k,n)}=\{\rho_r^*w_k(M)\}_{r\in[s,t]}$,
which represents the $k$-th Stiefel–Whitney class $w_k(M)$.
\end{enumerate}
\end{theorem}

\begin{proof}
Since $\rho_{r'}\circ\iota_r^{r'}\simeq\rho_r$, these maps induce the same homomorphism on cohomology,
so $\iota_r^{r'*}v_k^{r'}=\rho_r^*v_k(M)=v_k^r$ and $\mathbb{v}_k$ is a persistent cohomology class. The
equivalence $X^r\simeq M\vee Y^r$ gives $H^*(X^r)=H^*(M)\oplus\widetilde H^*(Y^r)$, where
$\widetilde H^*(M)\smile\widetilde H^*(Y^r)=0$ and, by naturality, each $\operatorname{Sq}^i$ preserves the two summands.

(1) Write $x=x_M+x_Y\in H^{n-k}(X^r)$ with $x_M\in H^{n-k}(M)$ and $x_Y\in\widetilde H^{n-k}(Y^r)$.
Since $v_k^r=\rho_r^*v_k(M)$ lies in $H^*(M)$, its product with $x_Y$ vanishes, so by the Wu criterion on $M$,
\[
v_k^r\smile x=v_k^r\smile x_M=v_k(M)\smile x_M=\operatorname{Sq}^k(x_M),
\]
while $\operatorname{Sq}^k(x)=\operatorname{Sq}^k(x_M)+\operatorname{Sq}^k(x_Y)=\operatorname{Sq}^k(x_M)$
by the hypothesis on $Y^r$. Hence $v_k^r\smile x=\operatorname{Sq}^k(x)$, so $\mathbb{v}_k$ satisfies the persistent Wu criterion.

(2) Suppose in addition $X^t\simeq M$. Any persistent class satisfying the criterion is determined by
its top value $v^t$ through $v^r=\iota_r^{t*}v^t$, and at scale $t$ this value satisfies
$v^t\smile x=\operatorname{Sq}^k(x)$ for all $x\in H^{n-k}(X^t)$. As $X^t\simeq M$ is a Poincaré duality
space, it is unique and equals $v_k^t$, so the class equals $\mathbb{v}_k$. Hence $\mathbb{v}_k$ is the
unique such class, and $\mathbb{v}_{(k,n)}=\mathbb{v}_k$ exists. The
same holds in each degree $j\le k$, giving $v_{(j,n)}^r=\rho_r^*v_j(M)$, so Definition~\ref{persswtype}
yields
\[
w_{(k,n)}^r=\sum_{i+j=k}\operatorname{Sq}^i(v_{(j,n)}^r)
=\rho_r^*\sum_{i+j=k}\operatorname{Sq}^i(v_j(M))=\rho_r^*w_k(M),
\]
which represents $w_k(M)$ by the Wu formula on $M$.
\end{proof}

In particular, under the hypotheses of Theorem~\ref{thm:wedge} the noise $Y^r$ may be nontrivial for $r<t$, where $X^r$ is then not homotopy equivalent to $M$, yet the persistent Stiefel–Whitney classes are still defined on all of $[s,t]$ and represent those of $M$. Only the top space is required to satisfy $X^t\simeq M$, whereas Proposition~\ref{swequal} requires the induced cohomology maps to remain isomorphisms. Theorem~\ref{thm:wedge} therefore applies to filtrations with additional cohomology classes arising from the summands $Y^r$. This is the sense in which the persistent formulation tolerates noise on which the positive Steenrod squares vanish.

\begin{remark}
The hypothesis holds in particular when each $Y^r$ is homotopy equivalent to a wedge of spheres.
Every reduced cohomology class of such a space is a sum of classes pulled back from the collapses onto the
individual sphere factors, and on a sphere every positive Steenrod square vanishes for degree reasons.
By naturality the positive Steenrod squares therefore vanish on all of $\widetilde H^*(Y^r)$. This is
the low-density sampling noise discussed in the introduction, where in the sparse regime the spurious
homology of a random Čech or Vietoris–Rips complex is generated by small spheres
\citep{kahle2011random, bobrowski2018topology, lim2024strange}.
\end{remark}

\section{Čech estimation}\label{sec:cech}
\subsection{Filtrations from point clouds}
A detailed explanation and visualization can be found in \citet{edelsbrunner2010computational}. Given a point cloud \( P = \{p_i\}_{i=1}^N \subset \mathbb{R}^D \), there exist several methods to construct filtrations that capture its topological structure at different scales.

For $r>0$, we define a \emph{Čech complex} $\check{C}(P,r)$ as the nerve of the collection of open balls $\{B_{\mathbb{R}^D}(p_i,r)\}_{p_i \in P}$, where $B_{\mathbb{R}^D}(p_i,r)$ denotes the open ball of radius $r$ centered at $p_i$. For \( 0 \leq r \leq r' \), there exists a natural simplicial inclusion $\check{C}(P, r) \subset \check{C}(P, r'),$
which ensures that the family of simplicial complexes \( \{\check{C}(P, r)\}_{r \geq 0} \) forms a filtration, known as the \emph{Čech filtration}.

A \emph{Delaunay simplex} of \( P \subset \mathbb{R}^D \) is the convex hull of a subset \( \sigma \subseteq P \) for which there is a closed ball whose bounding sphere passes through the points of \( \sigma \) and whose interior contains no point of \( P \). The collection of all Delaunay simplices is the \emph{Delaunay complex} of \( P \). For \( r \geq 0 \), the \emph{alpha complex} \( \alpha(P, r) \) is the subcomplex consisting of those Delaunay simplices that admit such an empty circumscribing ball of radius at most \( r \), and \( \{\alpha(P, r)\}_{r \geq 0} \) is a filtration, the \emph{alpha filtration}. For each \( r \geq 0 \), both \( \check{C}(P,r) \) and \( \alpha(P, r) \) are homotopy equivalent to the union of balls \( \bigcup_{p_i \in P} B_{\mathbb{R}^D}(p_i,r) \), and therefore to one another \citep{edelsbrunner2010computational}.

\subsection{Probabilistic argument}
To ensure the existence and effective computation of persistent Stiefel–Whitney classes with high probability, we rely on the Nerve Lemma and the main theorem from \citet{niyogi2008finding}.

\begin{lemma}[\citealt{hatcher2005algebraic}, Nerve Lemma]\label{nerve}
Let \(\mathcal{U} = \{U_i\}_{i \in I}\) be an open cover of a paracompact space \(X\) such that every nonempty finite intersection of sets in \(\mathcal{U}\) is contractible. Then the nerve of \(\mathcal{U}\) is homotopy equivalent to \(X\).
\end{lemma}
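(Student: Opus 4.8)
This is a classical result (\citealt{hatcher2005algebraic}, Corollary~4G.3); let me sketch the proof I would give. First I would reduce to the case that $X$ is paracompact — which is all that is needed here, since in the sequel $X$ is a union of balls in $\mathbb{R}^D$ — so that a locally finite partition of unity $\{\varphi_i\}_{i\in I}$ subordinate to $\mathcal{U}$ is available. Write $N(\mathcal{U})$ for the nerve, $|N(\mathcal{U})|$ for its geometric realization, and $U_\sigma=\bigcap_{i\in\sigma}U_i$ for a simplex $\sigma$ of $N(\mathcal{U})$. The object to produce is the \emph{nerve map} $f\colon X\to|N(\mathcal{U})|$, $f(x)=\sum_i\varphi_i(x)\,v_i$, where $v_i$ is the vertex corresponding to $U_i$; this is well defined because only finitely many $\varphi_i$ are nonzero near $x$ and every index with $\varphi_i(x)>0$ satisfies $x\in U_i$, so those indices span a simplex of $N(\mathcal{U})$ (their intersection contains $x$). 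The plan is to prove that $f$ is a homotopy equivalence.

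The key device is an auxiliary ``blow-up'' (homotopy colimit) space
\[
B=\left(\coprod_{\sigma\in N(\mathcal{U})}\Delta^{\sigma}\times U_\sigma\right)\big/\sim,
\]
where the identifications are induced by the face inclusions $\Delta^{\tau}\hookrightarrow\Delta^{\sigma}$ together with the inclusions $U_\sigma\hookrightarrow U_\tau$ whenever $\tau\subseteq\sigma$. It carries two projections: $q\colon B\to|N(\mathcal{U})|$, induced by $\Delta^{\sigma}\times U_\sigma\to\Delta^{\sigma}$, and $p\colon B\to X$, induced by $\Delta^{\sigma}\times U_\sigma\to U_\sigma\hookrightarrow X$. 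I would then show that both $p$ and $q$ are homotopy equivalences and that $q\circ s\simeq f$ for a suitable section $s$ of $p$, whence $X\simeq B\simeq|N(\mathcal{U})|$ realizes $f$ up to homotopy.

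For $q$: filter $|N(\mathcal{U})|$ by skeleta; the preimage over the $n$-skeleton is obtained from that over the $(n-1)$-skeleton by gluing in, for each $n$-simplex $\sigma$, the block $\Delta^{\sigma}\times U_\sigma$ along $\partial\Delta^{\sigma}\times U_\sigma$. Since $U_\sigma$ is contractible by hypothesis, $\Delta^{\sigma}\times U_\sigma\to\Delta^{\sigma}$ and $\partial\Delta^{\sigma}\times U_\sigma\to\partial\Delta^{\sigma}$ are homotopy equivalences, and as all the inclusions in sight are cofibrations the gluing lemma lets me induct and then pass to the colimit, even when $I$ is infinite. For $p$: the fiber $p^{-1}(x)$ is the realization of the nerve of $\{U_i:x\in U_i\}$, which is a full simplex and hence contractible; but ``contractible fibers'' alone is not enough since $p$ is not a fibration, so here I would use the partition of unity to write down an explicit section $s(x)=\big([\varphi(x)],x\big)$, where $[\varphi(x)]$ has barycentric coordinates $\varphi_i(x)$ in the simplex spanned by $\{i:\varphi_i(x)>0\}$, together with an explicit deformation retraction of $B$ onto $s(X)$.

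The hard part will be precisely this last point-set bookkeeping for $p$: building the deformation retraction so that it is continuous and stays inside $B$, controlling the infinite index set, and verifying $q\circ s\simeq f$. An alternative I would keep in reserve is to instead run a Mayer–Vietoris / spectral-sequence argument showing $p$ and $q$ induce homology isomorphisms, combined with a van Kampen computation of $\pi_1$ and Whitehead's theorem — but that route needs the spaces to have CW homotopy type, so I prefer the explicit homotopy-colimit construction. Everything else (well-definedness of $f$, the gluing-lemma inductions, the barycentric formula for $s$) is routine.
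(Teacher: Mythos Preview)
The paper does not prove this lemma at all: it is quoted verbatim as a classical fact from \citet{hatcher2005algebraic} and used as a black box, so there is no ``paper's own proof'' to compare against. Your sketch is the standard homotopy-colimit argument behind Hatcher's Corollary~4G.3 and is correct in outline; the reduction to paracompact $X$, the nerve map built from a subordinate partition of unity, the blow-up space $B$ with its two projections, the skeletal induction for $q$ using the gluing lemma, and the explicit section $s$ for $p$ are all the right moves. If anything, you are being more careful than necessary for the paper's purposes, since only the application to finite unions of Euclidean balls is ever used downstream.
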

\begin{remark}
In Euclidean space, every nonempty finite intersection of balls is contractible. Thus, by applying Lemma~\ref{nerve}, the Čech complex $\check{C}(P,r)$ and the alpha complex $\alpha(P,r)$ are homotopy equivalent to the union of balls $\bigcup_{p_i \in P} B_{\mathbb{R}^D}(p_i,r).$ 
\end{remark}

The ordinary Nerve Lemma does not address compatibility with the inclusion
maps of a filtration. We therefore use the functorial version proved by
\citet{bauer2023unified}.

\begin{lemma}[Functorial Nerve Lemma]\label{functorialnerve}
Let $P\subset\mathbb{R}^D$ be finite, let
$U^r=\bigcup_{p\in P}B_{\mathbb{R}^D}(p,r)$, and let $X^r$ denote either
$\check C(P,r)$ or $\alpha(P,r)$. For $r\leq r'$, there are homotopy
equivalences $\eta_r:|X^r|\to U^r$ and $\eta_{r'}:|X^{r'}|\to U^{r'}$ for
which
\[
\eta_{r'}\circ\iota_r^{r'}\simeq j_r^{r'}\circ\eta_r,
\]
where $\iota_r^{r'}:|X^r|\hookrightarrow|X^{r'}|$ and
$j_r^{r'}:U^r\hookrightarrow U^{r'}$ are the filtration inclusions.
Consequently, if $j_r^{r'}$ is a homotopy equivalence, then so is
$\iota_r^{r'}$.
\end{lemma}

\begin{proof}
For the Čech filtration, use the nested covers
$\{B_{\mathbb{R}^D}(p,r)\}_{p\in P}$ of $U^r$. For the alpha filtration,
use the nested restricted Voronoi covers
$\{B_{\mathbb{R}^D}(p,r)\cap V_p\}_{p\in P}$, whose nerves are the alpha
complexes. All nonempty finite intersections in either cover are convex, and
each cover element at scale $r$ is contained in the corresponding one at
scale $r'$. The functorial nerve theorem therefore gives homotopy
equivalences for which the displayed diagram commutes up to homotopy. The
final assertion follows from the two-out-of-three property for homotopy
equivalences.
\end{proof}

Let \( M \) be a compact smooth submanifold embedded in \( \mathbb{R}^D \). The \emph{reach} of \( M \), denoted by \( \tau(M) \), is defined as  
\begin{align*}
    \tau (M) = \sup \big\{ r \geq 0 \,\big|\, &\text{every point within distance } r \text{ of } M \\
    &\text{has a unique nearest point on } M \big\}.
    \end{align*}
Intuitively, the reach reflects both the curvature of $M$ and the width of its narrowest bottleneck. A larger reach indicates a smoothly curved manifold, whereas a small reach suggests sharp bends or potential self-intersections (see \citealt{federer1959curvature, aamari2019estimating}).

\begin{theorem}[\citealt{niyogi2008finding}]\label{nsw} Let $M$ be a compact $n$-dimensional submanifold of $\mathbb{R}^D$ with reach at least \( \tau \). Let \( P = \{p_1,\ldots,p_N\} \) be a set of \( N \) points drawn i.i.d. according to the uniform probability measure on \( M \). Let
\[
U = \bigcup_{p_i \in P} B_{\mathbb{R}^D}(p_i,\varepsilon)
\]
where $B_{\mathbb{R}^D}(p_i,\varepsilon)$ is a ball of radius $\varepsilon$ centered at $p_i$. Denote by \( B_{\mathbb{R}^n}(0,\varepsilon) \) the open ball in \( \mathbb{R}^n \) of radius \( \varepsilon \) centered at the origin. Then, for any \( \delta > 0 \), with probability at least \( 1 - \delta \), the set $U$ deformation retracts to $M$ provided
\[
N \geq \beta_1 \left(\log(\beta_2)+ \log\left(\frac{1}{\delta}\right)\right)
\]
and $\varepsilon < \frac{\tau}{2}$, where
\[
\begin{aligned}
\beta_1 &=\frac{\operatorname{vol}(M)}{
  {\cos^n(\theta_1)\operatorname{vol}(B_{\mathbb{R}^n}(0,\varepsilon/4))}},
&\theta_1&=\arcsin{\frac{\varepsilon}{8\tau}},\\
\beta_2 &=\frac{\operatorname{vol}(M)}{
  {\cos^n(\theta_2)\operatorname{vol}(B_{\mathbb{R}^n}(0,\varepsilon/8))}},
&\theta_2&=\arcsin{\frac{\varepsilon}{16\tau}}.
\end{aligned}
\]
\end{theorem}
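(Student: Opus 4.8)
Following \citet{niyogi2008finding}, the plan is to split the statement into a deterministic topological claim and a probabilistic sampling claim, and then combine them. The deterministic claim I would isolate is: if $P\subset M$ is $(\varepsilon/2)$-dense, meaning $M\subseteq\bigcup_{p\in P}B_{\varepsilon/2}(p)$, and $\varepsilon<\tau/2$, then $U=\bigcup_{p\in P}B_\varepsilon(p)$ deformation retracts onto $M$. Since $U$ lies in the open $\varepsilon$-tube of $M$ and $\varepsilon<\tau$, the reach guarantees that the nearest-point projection $\pi_0\colon U\to M$ is well defined and continuous, so I would take the straight-line homotopy $H(u,s)=(1-s)u+s\,\pi_0(u)$, which fixes $M$ pointwise and ends on $M$; the whole content is to show $H(u,s)\in U$ for every $u\in U$ and $s\in[0,1]$. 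For this one uses that the segment $[u,\pi_0(u)]$ is normal to $M$, so $d(H(u,s),M)=(1-s)\,d(u,M)$ is non-increasing, together with Federer's second-order bound $\langle n,q-m\rangle\le\|q-m\|^2/(2\tau)$ for $m,q\in M$ and $n\perp T_mM$ a unit normal \citep{federer1959curvature}; comparing $H(u,s)$ against a sample point supplied by $(\varepsilon/2)$-density then keeps it inside some $B_\varepsilon(p)$, and $\varepsilon<\tau/2$ is exactly the budget that makes these estimates close.

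Next I would establish the probabilistic claim: if $N\ge\beta_1(\log\beta_2+\log(1/\delta))$ then $P$ is $(\varepsilon/2)$-dense with probability at least $1-\delta$. Choose a maximal subset $\{x_1,\dots,x_\ell\}\subset M$ with pairwise ambient distances at least $\varepsilon/4$; by maximality the balls $B_{\varepsilon/4}(x_j)$ cover $M$, and by separation the sets $B_{\varepsilon/8}(x_j)\cap M$ are pairwise disjoint. The key estimate is a volume comparison: the orthogonal projection of $B_r(x)\cap M$ onto $T_xM$ is $1$-Lipschitz and, by the reach bound, its image contains a flat $d$-disk of radius $r\cos\theta$ with $\theta=\arcsin(r/2\tau)$, hence $\operatorname{vol}(B_r(x)\cap M)\ge\cos^d\theta\,\operatorname{vol}(B_r^d)$. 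Applying this with $r=\varepsilon/8$ (so $\theta=\theta_2$) together with disjointness gives $\ell\le\beta_2$, and with $r=\varepsilon/4$ (so $\theta=\theta_1$) gives $\mu(B_{\varepsilon/4}(x_j))\ge 1/\beta_1$ for the uniform probability measure $\mu$ on $M$. A union bound then yields $\Pr[\,P\text{ misses some }B_{\varepsilon/4}(x_j)\,]\le\ell(1-1/\beta_1)^N\le\beta_2 e^{-N/\beta_1}$, which is at most $\delta$ precisely when $N\ge\beta_1(\log\beta_2+\log(1/\delta))$; on the complementary event every $m\in M$ is within $\varepsilon/4$ of some $x_j$ and within $\varepsilon/4$ of a sample point lying in $B_{\varepsilon/4}(x_j)$, hence within $\varepsilon/2$ of $P$.

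Combining the two steps finishes the argument: on the event of the second step, which has probability at least $1-\delta$, the sample $P$ is $(\varepsilon/2)$-dense, and since $\varepsilon<\tau/2$ the first step gives that $U$ deformation retracts onto $M$. I expect the main obstacle to be the deterministic step: the naive triangle-inequality estimate is not by itself strong enough to keep the linear homotopy inside the union of balls, so one has to exploit the orthogonality of $[u,\pi_0(u)]$ to $M$ and the quadratic curvature bound coming from the reach simultaneously, and keep track of which sample ball covers $H(u,s)$ as $s$ varies. The volume-distortion estimate of the second step is of a similar geometric flavour but more routine, and the remaining sampling argument is a standard union bound over the elements of the cover.
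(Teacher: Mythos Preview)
The paper does not prove this theorem; it is quoted verbatim from \citet{niyogi2008finding} and used as a black box in the proof of Corollary~\ref{main}. Your outline is a faithful sketch of the original argument of Niyogi--Smale--Weinberger: the split into a deterministic deformation-retraction statement under an $(\varepsilon/2)$-density hypothesis and a probabilistic covering bound, the straight-line homotopy along normal segments controlled via the reach, the volume comparison by projecting $B_r(x)\cap M$ onto $T_xM$, and the union bound over a maximal $(\varepsilon/4)$-separated net. So there is nothing to compare against within this paper, and your plan is correct as a reproduction of the cited proof.
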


\begin{lemma}[Uniform reconstruction over a scale interval]\label{uniformnsw}
Under the hypotheses of Theorem~\ref{nsw}, suppose that
$0<\varepsilon<\tau/2$ and that the stated sample size bound holds. Then with
probability at least $1-\delta$, both of the following statements hold
simultaneously whenever $\varepsilon\leq r\leq r'\leq\tau/2$.
\begin{enumerate}
    \item $M$ is a deformation retract of $U^r$.
    \item The inclusion $j_r^{r'}:U^r\hookrightarrow U^{r'}$ is a homotopy
    equivalence.
\end{enumerate}
\end{lemma}

\begin{proof}
By Proposition~3.2 of \citet{niyogi2008finding}, the probability that $P$ is
$\varepsilon/2$-dense in $M$ is at least $1-\delta$. If $P$ is
$\varepsilon/2$-dense, then it is $r/2$-dense for every
$r\geq\varepsilon$.
Moreover, $r\leq\tau/2<\sqrt{3/5}\,\tau$, so Proposition~3.1 of the same
paper implies simultaneously for every $r\in[\varepsilon,\tau/2]$ that the
inclusion $a_r:M\hookrightarrow U^r$ is a homotopy equivalence, indeed a
deformation retract. For $r\leq r'$ we have
$a_{r'}=j_r^{r'}\circ a_r$. Since $a_r$ and $a_{r'}$ are homotopy
equivalences, the two-out-of-three property implies that $j_r^{r'}$ is a
homotopy equivalence.
\end{proof}

Let  \( \mathcal{M} \) denote the family of compact smooth \( n \)-dimensional manifolds \( M \) embedded in \( \mathbb{R}^D \), satisfying \( \operatorname{reach}(M) \geq \tau \) and \( \operatorname{vol}_n(M) \leq V_n \). Denote by \( B_{\mathbb{R}^n}(0,\varepsilon) \) the open ball in \( \mathbb{R}^n \) of radius \( \varepsilon \) centered at the origin.

\begin{corollary}\label{main}
    Let $M\in\mathcal{M}$, let $0<\varepsilon<\tau/2$, and let
    $P=\{p_i\}_{i=1}^N\subset M\subset\mathbb{R}^D$ be sampled independently
    from the uniform probability measure on $M$. Set
    $U^r=\bigcup_{1\leq i\leq N}B_{\mathbb{R}^D}(p_i,r)$. Suppose that
    \[
    N\geq\beta_1\left(\log(\beta_2)+
    \log\left(\frac{1}{\delta}\right)\right),
    \]
    where
    \[
    \begin{aligned}
    \beta_1 &=\frac{V_n}{
      {\cos^n(\theta_1)\operatorname{vol}(B_{\mathbb{R}^n}(0,\varepsilon/4))}},
    &\theta_1&=\arcsin{\frac{\varepsilon}{8\tau}},\\
    \beta_2 &=\frac{V_n}{
      {\cos^n(\theta_2)\operatorname{vol}(B_{\mathbb{R}^n}(0,\varepsilon/8))}},
    &\theta_2&=\arcsin{\frac{\varepsilon}{16\tau}}.
    \end{aligned}
    \]
    Then, with probability at least $1-\delta$, the Čech and alpha
    filtrations both admit persistent cohomology classes
    $\{\mathbb{w}_k\}_{k=1}^n$ on $[\varepsilon,\tau/2]$. If $X^r$ denotes
    either of these complexes, the homotopy equivalences $f^r:X^r\to M$ may
    be chosen so that $f^{r'}\circ\iota_r^{r'}\simeq f^r$ for $r\leq r'$ and
    \[
    f^{r*}w_k(M)=w_k^r,
    \qquad
    \mathbb{w}_k=\{w_k^r\}_{r\in[\varepsilon,\tau/2]}.
    \]
    Here $w_k(M)$ is the $k$-th Stiefel–Whitney class of $M$. In other words, for each $k$, there exists a (not necessarily nontrivial) persistent cohomology class with a lifespan of at least \( \frac{\tau}{2} - \varepsilon \) that represents \( w_k(M) \). Moreover, the class $\mathbb{w}_k$ coincides with the $k$-th persistent Stiefel–Whitney class of dimension \( n \) in Definition~\ref{persswtype}.
\end{corollary}

\begin{proof}
    Since $\operatorname{vol}_n(M)\leq V_n$, the sample size assumption
    implies the bound in Lemma~\ref{uniformnsw}. It follows that, with
    probability at least $1-\delta$, the inclusion
    $U^r\hookrightarrow U^{r'}$ is a homotopy equivalence whenever
    $\varepsilon\leq r\leq r'\leq\tau/2$. Hence
    Lemma~\ref{functorialnerve} implies that the filtration inclusion
    $\iota_r^{r'}:X^r\hookrightarrow X^{r'}$ is also a homotopy equivalence,
    for either the Čech or the alpha filtration.

    Put $T=\tau/2$ and choose a homotopy equivalence $f^T:X^T\to M$. For
    $r\in[\varepsilon,T]$, define $f^r=f^T\circ\iota_r^T$. Each $f^r$ is a
    homotopy equivalence and $f^{r'}\circ\iota_r^{r'}=f^r$. In particular,
    each induced map $\iota_r^{T*}:H^*(X^T;\mathbb{Z}/2)\to
    H^*(X^r;\mathbb{Z}/2)$ is an isomorphism. Proposition~\ref{swequal}
    therefore gives
    \[
    w_k^r=\iota_r^{T*}f^{T*}w_k(M)=f^{r*}w_k(M),
    \]
    and the classes $\{w_k^r\}_{r\in[\varepsilon,T]}$ form the asserted
    persistent Stiefel--Whitney class.
\end{proof}

\section{Algorithm}\label{sec:algorithm}

\subsection{Algorithm for computing persistent Stiefel–Whitney classes}

We present two specific computational algorithms: one for computing Stiefel–Whitney classes of dimension $n$ at a fixed filtration scale (Algorithm~\ref{alg:sw_fixed}) and another for tracking their persistence across a filtration (Algorithm~\ref{alg:pers_sw}).

The only step of Algorithm~\ref{alg:sw_fixed} that is not a standard (co)homology computation is the determination of the Wu classes, which reduces to solving a linear system over $\mathbb{Z}/2$ by Gaussian elimination.

\begin{proposition}\label{prop:wusolve}
Fix bases $\{\alpha_i\}_{i=1}^{a}$, $\{x_j\}_{j=1}^{b}$, and $\{\gamma_l\}_{l=1}^{c}$ of $H^k(X^t;\mathbb{Z}/2)$, $H^{n-k}(X^t;\mathbb{Z}/2)$, and $H^n(X^t;\mathbb{Z}/2)$ respectively, read off from the representative cocycles of the persistent cohomology, and let $L\in(\mathbb{Z}/2)^{bc\times a}$ and $q\in(\mathbb{Z}/2)^{bc}$ have entries
\[
L_{(jl),i}=\langle\alpha_i\smile x_j,\gamma_l\rangle,\qquad q_{jl}=\langle\operatorname{Sq}^k(x_j),\gamma_l\rangle,
\]
where $\langle\,\cdot\,,\gamma_l\rangle$ denotes the $\gamma_l$-coordinate. Then $v=\sum_i\xi_i\alpha_i$ is a $k$-th Wu class of dimension $n$ of $X^t$ if and only if $L\xi=q$. Consequently the system is consistent if and only if the $k$-th Wu class of dimension $n$ exists in $H^k(X^t;\mathbb{Z}/2)$, and its solution is unique if and only if the cup-product pairing $H^k(X^t)\times H^{n-k}(X^t)\to H^n(X^t)$ is left-nondegenerate. In particular, if $X^t$ is a mod-2 Poincaré duality space of dimension $n$, then the Wu class exists and is unique.
\end{proposition}

\begin{proof}
Expanding $v=\sum_i\xi_i\alpha_i$ and reading off the $\gamma_l$-coordinate gives $\langle v\smile x_j,\gamma_l\rangle=\sum_i\langle\alpha_i\smile x_j,\gamma_l\rangle\,\xi_i$, so the equalities $v\smile x_j=\operatorname{Sq}^k(x_j)$ for all $j,l$ are precisely the system $L\xi=q$. Since $\{x_j\}$ is a basis, these are equivalent to $v\smile x=\operatorname{Sq}^k(x)$ for every $x\in H^{n-k}(X^t)$, so $v$ is a $k$-th Wu class of dimension $n$ if and only if $L\xi=q$. The system is therefore consistent exactly when such a $v$ exists, and $L\xi=0$ forces $\xi=0$ exactly when no nonzero $v$ has $v\smile x=0$ for all $x$, which is the left-nondegeneracy of the cup-product pairing. If $X^t$ is a mod-2 Poincaré duality space of dimension $n$, this pairing is perfect, so $v\mapsto(x\mapsto v\smile x)$ is a bijection onto the functionals on $H^{n-k}(X^t)$, giving both existence and uniqueness.
\end{proof}

\begin{algorithm}[H]
\caption{Computation of Stiefel–Whitney Classes of dimension $n$ at filtration Scale $t$}
\label{alg:sw_fixed}
\begin{algorithmic}[1]
\Require Finite sampled points $P=\{p_i\}_{i=1}^N \subset \mathbb{R}^D$ from an underlying manifold $M$.
\Require Filtration scale $t$.
\Require Estimated intrinsic dimension $n$ of the manifold $M$.
\Ensure Stiefel–Whitney classes $\{w_{(k,n)}\}_{k=1}^n$ of dimension $n$ at the filtration scale $t$ in Definition~\ref{wuswtype}.

\State Construct a Čech or alpha filtration $\mathbb{X} = \{X^r\}_{r \geq 0}$ from $P$.
\State Compute the cohomology $H^*(X^t; \mathbb{Z}/2)$ and extract representative cocycles.
\For{each $0 \leq k \leq n$}
    \State Assemble and solve the linear system $L\xi = q$ of Proposition~\ref{prop:wusolve} over $\mathbb{Z}/2$, and set $v_{(k,n)} = \sum_i \xi_i \alpha_i$. If the system is inconsistent, report that the Wu class does not exist.
\EndFor
\For{each $1 \leq k \leq n$}
    \State Compute the $k$-th Stiefel–Whitney class of dimension $n$ using the Wu formula~\ref{wuformula}:
    \[
        w_{(k,n)} = \sum_{i+j=k} \operatorname{Sq}^i(v_{(j,n)}).
    \]
\EndFor
\State \Return $\{w_{(k,n)}\}_{k=1}^n$.
\end{algorithmic}
\end{algorithm}

\begin{algorithm}[H]
\caption{Computation of Persistent Stiefel–Whitney Classes of dimension $n$}
\label{alg:pers_sw}
\begin{algorithmic}[1]
\Require Finite sampled points $\{p_i\}_{i=1}^N \subset \mathbb{R}^D$ from an underlying manifold $M$.
\Require Estimated intrinsic dimension $n$ of underlying manifold $M$.
\Require Filtration interval $[s, t]$.
\Require The Wu class of dimension $n$ at scale $t$ is unique in every degree.
\Ensure Persistent Stiefel–Whitney classes $\{\mathbb{w}_{(k,n)}\}_{k=1}^n$ of dimension $n$ over $[s, t]$ if they exist.

\State Use Algorithm~\ref{alg:sw_fixed} to compute the filtration $\mathbb{X}$ and the Wu classes $\{v_{(i,n)}^t\}$ of dimension $n$ at filtration scale $t$.
\State For each persistent cohomology bar $I_l$ that does not contain $t$, let $r_l$ be the last filtration value at which the bar is nonzero, and extract a representative cocycle $x_l$ at $r_l$.
\For{each $l$}
    \State For $k = n-\dim x_l$, verify the Wu criterion at $r_l$ as in Remark~\ref{endpoint}.
    \[
        v_{(k,n)}^{r_l} = \iota_{r_l}^{t*} v_{(k,n)}^t,
        \qquad
        v_{(k,n)}^{r_l}\smile x_l = \operatorname{Sq}^k(x_l).
    \]
\EndFor
\If{valid for all $l$}
    \State Compute persistent Stiefel–Whitney classes of dimension $n$ using the persistent Wu formula~\ref{persswtype}:
    \[
        \mathbb{w}_{(k,n)} = \left\{w_{(k,n)}^r\right\}_{r \in [s, t]}, \quad w_{(k,n)}^r = \iota_r^{t*}\!\left[\sum_{i+j=k} \operatorname{Sq}^i(v_{(j,n)}^t)\right].
    \]
\EndIf
\State \Return $\{\mathbb{w}_{(k,n)}\}_{k=1}^n$.
\end{algorithmic}
\end{algorithm}

\subsection{Algorithms for cohomology operations}
Given a simplicial complex \( X = \{\sigma_i\} \), any cocycle \( c \in Z^p(X; \mathbb{Z}/2) \) can be expressed as  
\[
c = \sum_{j \in J} \sigma_j^*,
\]
where each \( \sigma_j^* \) is the dual cochain corresponding to the \( p \)-dimensional simplex \( \sigma_j \). For convenience, we write \(c = \sum_{j\in J} \sigma_j \) or $\{\sigma_j\}_{j \in J}$ to represent its underlying support.

For the computation of the cup products of two cochains (Algorithm~\ref{alg:cup_product}), we refer to Algorithm~1 in \citet{contessoto2021persistent}. To compute Steenrod squares, we employ Theorem \ref{newsq}, which utilizes the cup-\( i \) coproduct (see Algorithm~\ref{alg:cupico}). The simplex-wise computation of Steenrod squares follows Algorithm~\ref{alg:steenrod}.

\begin{remark}\label{gauss}
Whether two \( p \)-dimensional cocycles \( c_1 \) and \( c_2 \) are cohomologous can be decided by linear algebra over \( \mathbb{Z}/2 \). Let \( A \) be the matrix of the coboundary operator
\[
\delta : C^{p-1}(X;\mathbb{Z}/2) \to C^p(X;\mathbb{Z}/2).
\]
Then \( c_1 \) and \( c_2 \) are cohomologous if and only if \( c_1-c_2 \in \operatorname{Im}(A) \), that is, the system \( A\mathbf{x} = c_1 - c_2 \) with \( \mathbf{x} \in (\mathbb{Z}/2)^{|X_{p-1}|} \) is solvable, which is decided by row-reducing the augmented matrix \( (A \mid c_1 - c_2) \) over \( \mathbb{Z}/2 \). See Section~3.3 of \citet{contessoto2021persistent}.
\end{remark}

\begin{algorithm}
\caption{Cup product computation}
\label{alg:cup_product}
\begin{algorithmic}[1]
\Require Two cochains $\sigma_1$ and $\sigma_2$, and the simplicial complex $X$.
\Ensure The cup product $\sigma = \sigma_1 \smile \sigma_2$ at cochain level.
\State $\sigma \gets [\ ]$
\If{$\dim(\sigma_1) + \dim(\sigma_2) \leq \dim(X)$}
    \For{each simplex $a=[a_0,\ldots,a_p]$ in the support of $\sigma_1$}
        \For{each simplex $b=[b_0,\ldots,b_q]$ in the support of $\sigma_2$}
            \If{$a_p=b_0$}
                \State $c \gets [a_0,\ldots,a_p,b_1,\ldots,b_q]$
                \If{$c \in X_{p+q}$}
                    \State $\sigma\gets\sigma\mathbin{\triangle}\{c\}$. \Comment{Addition over $\mathbb{Z}/2$}
                \EndIf
            \EndIf
        \EndFor
    \EndFor
\EndIf
\State \Return $\sigma$
\end{algorithmic}
\end{algorithm}

\begin{algorithm}
\caption{Cup-\(i\) coproduct (cf. Definition~\ref{cupicoprod})}
\label{alg:cupico}
\begin{algorithmic}[1]
\Require An $n$-dimensional simplex \( \sigma = [v_0, v_1, \dots, v_n] \) and an integer \(0\leq i\leq n\).
\Ensure A list of pairs \( (\sigma_L, \sigma_R) \) representing the summands of $\Delta_i(\sigma)$.
\State $\Delta_i(\sigma) \gets [\ ]$
\For{each \(U=\{u_1<\cdots<u_{n-i}\}\subseteq\{0,1,\dots,n\}\)}
    \State Initialize \( \sigma_L \leftarrow \sigma \), \( \sigma_R \leftarrow \sigma \).
    \For{$j=1,\dots,n-i$}
        \If{\(u_j+j\equiv 0\pmod 2\)}
            \State Remove \(v_{u_j}\) from \(\sigma_L\).
        \Else \Comment{$u_j+j\equiv 1\pmod 2$}
            \State Remove \(v_{u_j}\) from \(\sigma_R\).
        \EndIf
    \EndFor
    \State Add the pair \( (\sigma_L, \sigma_R) \) to $\Delta_i(\sigma)$.
\EndFor
\State \Return $\Delta_i(\sigma)$.
\end{algorithmic}
\end{algorithm}

\begin{algorithm}
    \caption{Steenrod Square computation (cf. Theorem~\ref{newsq})}
    \label{alg:steenrod}
    \begin{algorithmic}[1]
        \Require A simplicial complex $S$ and the support $c=\{\tau_j\}\subseteq S_n$ of an $n$-cocycle over $\mathbb{Z}/2$.
        \Require A natural number $k$.
        \Ensure The support of the cochain $\operatorname{Sq}^k(c)=(c\otimes c)\Delta_{n-k}$.
        \If{$k>n$}
            \State \Return $[\ ]$.
        \ElsIf{$k=0$}
            \State \Return $c$. \Comment{$\operatorname{Sq}^0$ is the identity}
        \EndIf
        \State $Q \gets [\ ]$.
        \For{each $\sigma\in S_{n+k}$}
            \State $q\gets 0\in\mathbb{Z}/2$.
            \For{each $(\sigma_L,\sigma_R)\in\Delta_{n-k}(\sigma)$ from Algorithm~\ref{alg:cupico}}
                \If{$\dim(\sigma_L)=\dim(\sigma_R)=n$ and $\sigma_L\in c$ and $\sigma_R\in c$}
                    \State $q\gets q+1\pmod 2$.
                \EndIf
            \EndFor
            \If{$q=1$}
                \State Append $\sigma$ to $Q$.
            \EndIf
        \EndFor
        \State \Return $Q$.
    \end{algorithmic}
\end{algorithm}

\subsection{Time complexity}

\begin{theorem}[\citealt{contessoto2021persistent}, Remark 4]\label{time:cupprod}
    Let $X$ be a simplicial complex, and let $c_1, c_2 \in H^*(X;\mathbb{Z}/2)$ be two cohomology classes. Denote the number of simplices in $X$ by $N$. Then, computing $c_1 \smile c_2$ via Algorithm~\ref{alg:cup_product} has complexity $O(N^2).$
\end{theorem}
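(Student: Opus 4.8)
The plan is to charge the running time of Algorithm~\ref{alg:cup_product} against its two nested loops: first bound the number of iterations of the double loop, then bound the cost of a single pass through the loop body, and finally multiply. Running the algorithm on cochain representatives $\sigma_1,\sigma_2$ of $c_1,c_2$, I would reduce (as the algorithm implicitly does) to the homogeneous case $\dim(\sigma_1)=p$, $\dim(\sigma_2)=q$. If $p+q>\dim(X)$ the algorithm returns the empty cochain after one comparison, so assume $p+q\le\dim(X)$. By definition the support of a $p$-cochain is a subset of the set $X_p$ of $p$-simplices of $X$, so $\operatorname{size}(\sigma_1)\le|X_p|$ and $\operatorname{size}(\sigma_2)\le|X_q|$; since $|X_p|,|X_q|\le N$, the double loop executes at most $|X_p|\cdot|X_q|\le N^2$ times. (Summing over all bidegrees even gives $\sum_{p,q}|X_p|\,|X_q|=\bigl(\sum_p|X_p|\bigr)\bigl(\sum_q|X_q|\bigr)=N^2$, so the $O(N^2)$ iteration count persists for inhomogeneous classes; and for $p\ne q$ the disjointness of $X_p,X_q$ together with the AM--GM inequality sharpens the homogeneous bound to $N^2/4$.)

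Next I would bound the work done in one iteration. Reading $a=\sigma_1(i)$, $b=\sigma_2(j)$, comparing the last vertex of $a$ with the first vertex of $b$, concatenating them into the candidate simplex $c$ (which has $p+q+1\le\dim(X)+1$ vertices), and appending $c$ to the output each take constant time when a simplex --- a tuple of at most $\dim(X)+1=O(\log N)$ vertex labels --- is treated as a unit-cost object, and $O(\log N)$ time if one instead charges for every vertex touched. The one step that needs care is the test $c\in X_{p+q}$: precomputing once, in $O(N)$ time, a hash table (or a balanced search tree) storing every simplex of $X$ keyed by its sorted vertex tuple, each such membership query is answered in expected $O(1)$ time (respectively $O(\log N)$ deterministically). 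Hence each iteration costs $O(1)$ under the simplex-as-atom convention.

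Multiplying the $O(N^2)$ iteration count by the $O(1)$ per-iteration cost, and absorbing the $O(N)$ preprocessing, yields the claimed $O(N^2)$ bound --- or $O(N^2\log N)$ if one insists on counting individual vertex operations, which is still essentially tight since $\dim(X)=O(\log N)$ for any simplicial complex on $N$ simplices. The main, and really the only, obstacle is the membership query $c\in X_{p+q}$: done naively by scanning $X_{p+q}$ it costs $\Theta(N)$ per iteration and would inflate the estimate to $O(N^3)$, so the proof must explicitly commit to the auxiliary lookup structure and to the RAM-model conventions under which Algorithm~\ref{alg:cup_product}, and the complexity statements of \citet{contessoto2021persistent}, are phrased.
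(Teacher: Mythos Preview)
The paper does not supply its own proof of this statement: Theorem~\ref{time:cupprod} is simply quoted from \citet{contessoto2021persistent}, Remark~4, and no argument appears in the present paper. There is therefore nothing to compare your proposal against here.

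That said, your argument is the standard and correct one: the double loop runs at most $\operatorname{size}(\sigma_1)\cdot\operatorname{size}(\sigma_2)\le |X_p|\cdot|X_q|\le N^2$ times, and with a precomputed dictionary of simplices each pass through the loop body is $O(1)$ in the RAM model, giving the stated $O(N^2)$. Your explicit attention to the membership test $c\in X_{p+q}$ is the right emphasis---without the auxiliary lookup structure the bound would indeed degrade to $O(N^3)$---and the side remark that $\dim(X)+1=O(\log N)$ is also correct, since a single top simplex already contributes $2^{\dim(X)+1}-1$ faces to $N$.
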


\begin{lemma}\label{time:steenrod}
    Let $X$ be a simplicial complex with $N$ simplices and let $c \in H^p(X;\mathbb{Z}/2)$. Computing $\operatorname{Sq}^k(c)$ via Algorithm~\ref{alg:steenrod} takes $O(N^3)$ time.

    \begin{proof}
        There are at most $N$ simplices in $X_{p+k}$. For each such simplex
        $\sigma$, Algorithm~\ref{alg:cupico} enumerates subsets of its vertex
        set. The number of these subsets is at most $N$, since all faces of
        $\sigma$ are simplices of $X$. Thus at most $O(N^2)$ tensor pairs are
        examined. If the support of $c$ is stored as a list, each of the two
        membership tests in Algorithm~\ref{alg:steenrod} costs $O(N)$, giving
        the upper bound $O(N^3)$. (With a hash-set representation of the
        support, this part can instead be carried out in $O(N^2)$ expected
        time.)
    \end{proof}
\end{lemma}

Let $P=\{p_i\}_{i=1}^N$ be a point cloud in $\mathbb{R}^D$, and let $\mathbb{X}$ be the alpha filtration obtained from $P$ on filtration interval $[s,t]$. Suppose that for $0 \leq k \leq n$, the persistent Wu classes $\{\mathbb{v}_{(k,n)}\}_{k=0}^n$ and the persistent Stiefel–Whitney classes $\{\mathbb{w}_{(k,n)}\}_{k=1}^n$ of dimension $n$ for $\mathbb{X}$ exist.

\begin{theorem}
    With the notation above, let $S$ denote the number of simplices in $\mathbb{X}$. Then $S=O(N^{\lceil D/2 \rceil})$. Computing the persistent Wu and Stiefel–Whitney classes of dimension $n$ on $[s,t]$ via Algorithm~\ref{alg:pers_sw} takes $O(nS^4)=O(nN^{4\lceil D/2 \rceil})$ time, not counting the construction of $\mathbb{X}$.

    \begin{proof}
    The Delaunay complex of \( P \subset \mathbb{R}^D \) has \( O(N^{\lceil D/2 \rceil}) \) simplices \citep{seidel1995upper}, and so does \( \mathbb{X} \). Its persistent cohomology together with representative cocycles is computed in \( O(S^3) \) time (\citealt{edelsbrunner2002topological}, Section~4). Fix \( 1\le k\le n \). The bases $\{\alpha_i\}$, $\{x_j\}$, $\{\gamma_l\}$ of Proposition~\ref{prop:wusolve} have at most $S$ elements each, so the system $L\xi=q$ has $O(S)$ unknowns and $O(S^2)$ equations. Assembling it uses $O(S^2)$ cup products, each of cost $O(S^2)$ (Theorem~\ref{time:cupprod}), and $O(S)$ Steenrod squares, each of cost $O(S^3)$ (Lemma~\ref{time:steenrod}). Expressing their results in the basis $\{\gamma_l\}$ uses a single row reduction of the coboundary matrix, of cost $O(S^3)$ (Remark~\ref{gauss}). Solving $L\xi=q$ by Gaussian elimination costs $O(S^4)$ (\citealt{golub2013matrix}, Chapter~3), so each Wu class is obtained in $O(S^4)$ time, and applying the Wu formula (Definition~\ref{persswtype}) adds $O(S^3)$. Summing over $k$, and adding the verifications of the Wu criterion at the bar endpoints in Algorithm~\ref{alg:pers_sw} (Remark~\ref{endpoint}), the total is $O(nS^4)$.
    \end{proof}
\end{theorem}

\section{Computational examples}\label{sec:examples}

For our computational experiments, we used the GUDHI library \citep{maria2014gudhi} to construct the alpha filtration and the Dionysus 2 library \citep{dionysus2} to compute persistent cohomology and extract representative cocycles. All implementations are publicly available at \citet{Gang2025}.

\subsection{\texorpdfstring{$\mathbb{CP}^2$ blown up at 1 point and $S^2 \times S^2$}{CP2 blown up at 1 point and S2 x S2}}

Here $\overline{\mathbb{CP}}^2$ denotes $\mathbb{CP}^2$ with the opposite orientation. The blow-up of \( \mathbb{CP}^2 \) at a point is diffeomorphic to \( \mathbb{CP}^2 \# \overline{\mathbb{CP}}^2 \) (\citealt{huybrechts2005complex}, Proposition 2.5.8). The manifolds \( \mathbb{CP}^2 \# \overline{\mathbb{CP}}^2 \) and \( S^2 \times S^2 \) share the same $\mathbb{Z}/2$-cohomology groups, but they are distinguished by their $\mathbb{Z}/2$ cup-product structures.

\begin{figure}[ht]
    \centering
    \includegraphics[width=0.8\textwidth]{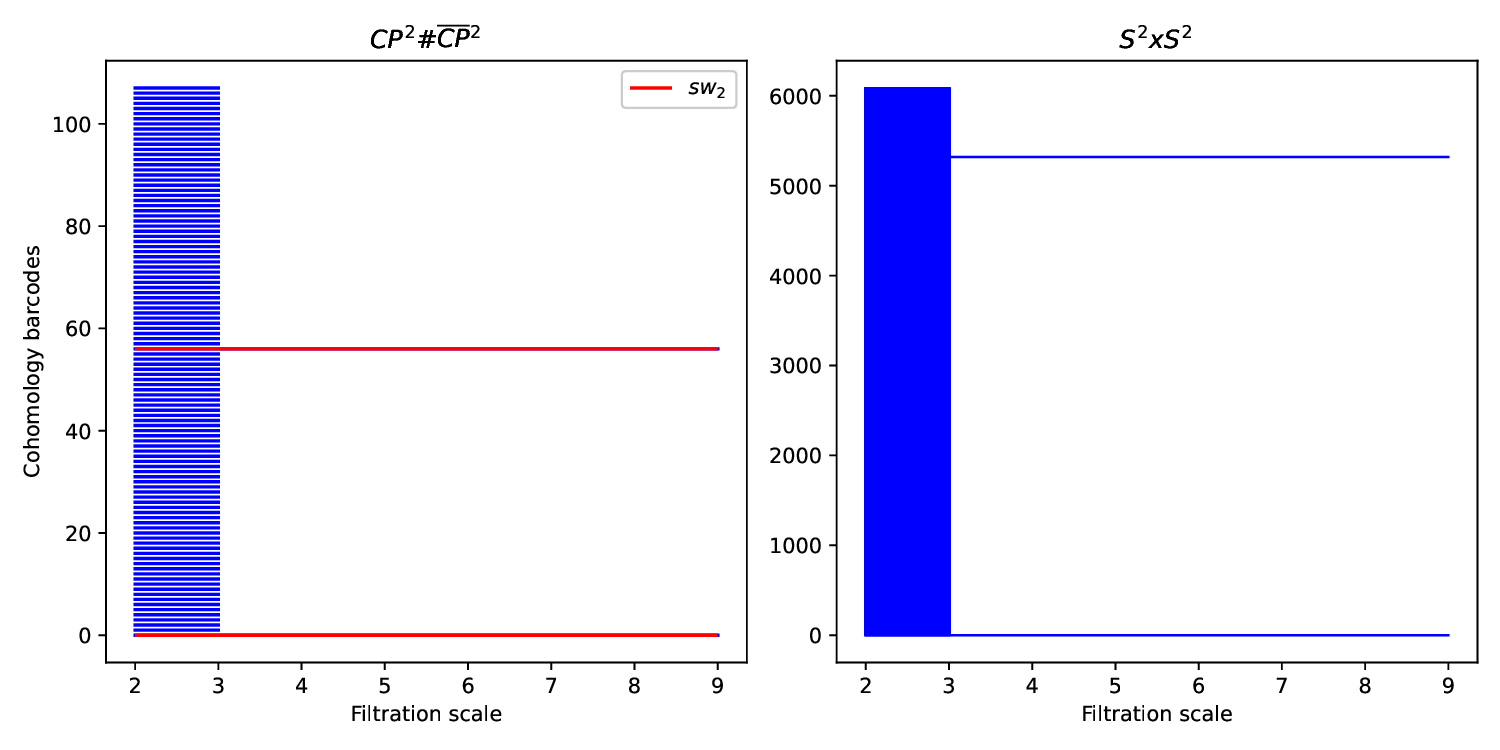}
    \caption{The second persistent cohomology barcodes and the second Stiefel–Whitney classes of dimension 4 at the maximum filtration scale computed for $\mathbb{CP}^2 \# \overline{\mathbb{CP}}^2$ and $S^2 \times S^2$. The horizontal axis represents the filtration scale, while the vertical axis indicates the persistent cohomology barcodes. Red bars correspond to cohomology classes whose sum yields the second Stiefel–Whitney class of dimension 4.}
    \label{fig:sw2_barcodes}
\end{figure}

To construct the triangulation of \( \mathbb{CP}^2 \# \overline{\mathbb{CP}}^2 \), we utilized the triangulation of \( \mathbb{CP}^2 \) from the Steenroder library \citep{lupo2022persistence}. We constructed filtered simplicial complexes from the triangulations of \( \mathbb{CP}^2 \# \overline{\mathbb{CP}}^2 \) and \( S^2 \times S^2 \), assigning filtration scales to simplices based on their dimensions. Using these filtered complexes, we computed persistent cohomology over \( \mathbb{Z}/2 \) and applied our algorithm to determine the first and second Stiefel–Whitney classes of dimension $4$ for each case. Since both manifolds are orientable, the first Stiefel–Whitney class is trivial in both cases. Figure~\ref{fig:sw2_barcodes} presents the results for the second persistent cohomology and the second Stiefel–Whitney class of dimension $4$ at the maximum filtration scale. As expected, both manifolds exhibit two independent second cohomology elements. However, for \( \mathbb{CP}^2 \# \overline{\mathbb{CP}}^2 \), the sum of the cohomology classes corresponding to the two red bars yields a nontrivial second Stiefel–Whitney class, whereas for \( S^2 \times S^2 \), the second Stiefel–Whitney class remains trivial. This result demonstrates the practical applicability of our algorithm.

\subsection{Projective spaces of lines}
In image analysis, topological data analysis offers a framework for capturing geometric and structural features that conventional pixel-based methods often fail to detect. By representing local image patches as high-dimensional point clouds, persistent homology enables the extraction of topological features that encode shape and texture information, complementing traditional feature extraction methods~\citep{carlsson2008local, singh2023topological, hensel2021survey, clough2020topological}.

\citet{scoccola2023approximate} proposed a method to compute characteristic classes of vector bundles by estimating tangent spaces using local PCA. In their approach, an image is partitioned into 250 patches of size \(10 \times 10\), which are then treated as the point cloud in \( \mathbb{R}^{100} \). They computed Stiefel–Whitney classes associated with these data points and argued that the result suggests that these patches are naturally embedded in the projective plane \( \mathbb{RP}^2 \), which has nontrivial first and second Stiefel–Whitney classes.

We used the LINES dataset from \citet{scoccola2023approximate}, which consists of the 250 patches discussed above. To reduce dimensionality while preserving essential structure, we applied principal component analysis (PCA) \citep{abdi2010principal}, projecting the dataset into $\mathbb{R}^5$, and rescaled the point cloud by its largest norm.

\begin{figure}[ht]
    \centering
    \includegraphics[width=\textwidth]{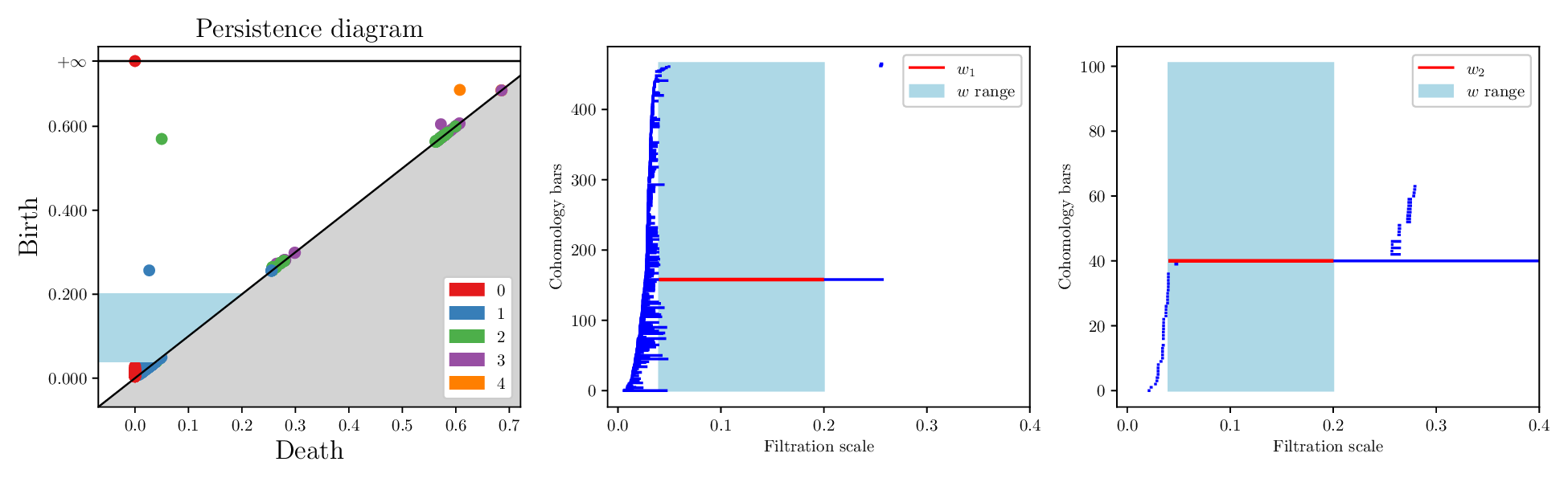}
    \caption{Persistent cohomology and persistent Stiefel–Whitney classes of dimension $2$ for the LINES dataset. The leftmost plot shows the persistent cohomology diagram over $\mathbb{Z}/2$. The middle and rightmost plots illustrate the first and second persistent cohomology bars and persistent Stiefel–Whitney classes of dimension $2$. The shaded regions indicate the filtration interval $[0.04,0.2]$ over which these classes satisfy the persistent Wu criterion.}
    \label{fig:sw_persistence}
\end{figure}

From this processed point cloud, we constructed an alpha filtration and computed its persistent cohomology over $\mathbb{Z}/2$. Applying our algorithm on the complex at scale $t$, we identified the first and second persistent Stiefel–Whitney classes of dimension $2$ over the filtration interval \( [0.04, 0.2] \). The results are presented in Figure~\ref{fig:sw_persistence}, in which the two nontrivial bars in each degree carry the persistent Stiefel–Whitney classes. Alongside these bars, each degree contains short-lived bars produced by sampling noise. At scales where these additional bars are nonzero, the cohomology of $X^r$ differs from that of $\mathbb{RP}^2$. The persistent Wu criterion nonetheless holds throughout $[0.04,0.2]$, and the computed classes agree with the known classes $w_1(\mathbb{RP}^2)$ and $w_2(\mathbb{RP}^2)$. This behavior is consistent with the noise-tolerance mechanism of Theorem~\ref{thm:wedge}.

\subsection{Conformation space of cyclooctane}
Cyclooctane (\(\text{C}_8\text{H}_{16}\)) is a flexible cyclic alkane with a complex conformation space due to its increased degrees of freedom. Topological data analysis has been applied to this space using persistent homology to analyze
its geometric and topological structure. The conformation space of cyclooctane is nonmanifold, described as the union of a Klein bottle and \( S^2 \), which intersect along two disjoint circles \citep{lupo2022persistence, eliel1994stereochemistry, membrillo2019topology, adams2011javaplex}. Figure~\ref{fig:cyclooctane_conformation} illustrates the molecular structure of cyclooctane and its conformation space projected into \( \mathbb{R}^3 \).

\begin{figure}[ht]
    \centering
    \includegraphics[width=0.8\textwidth]{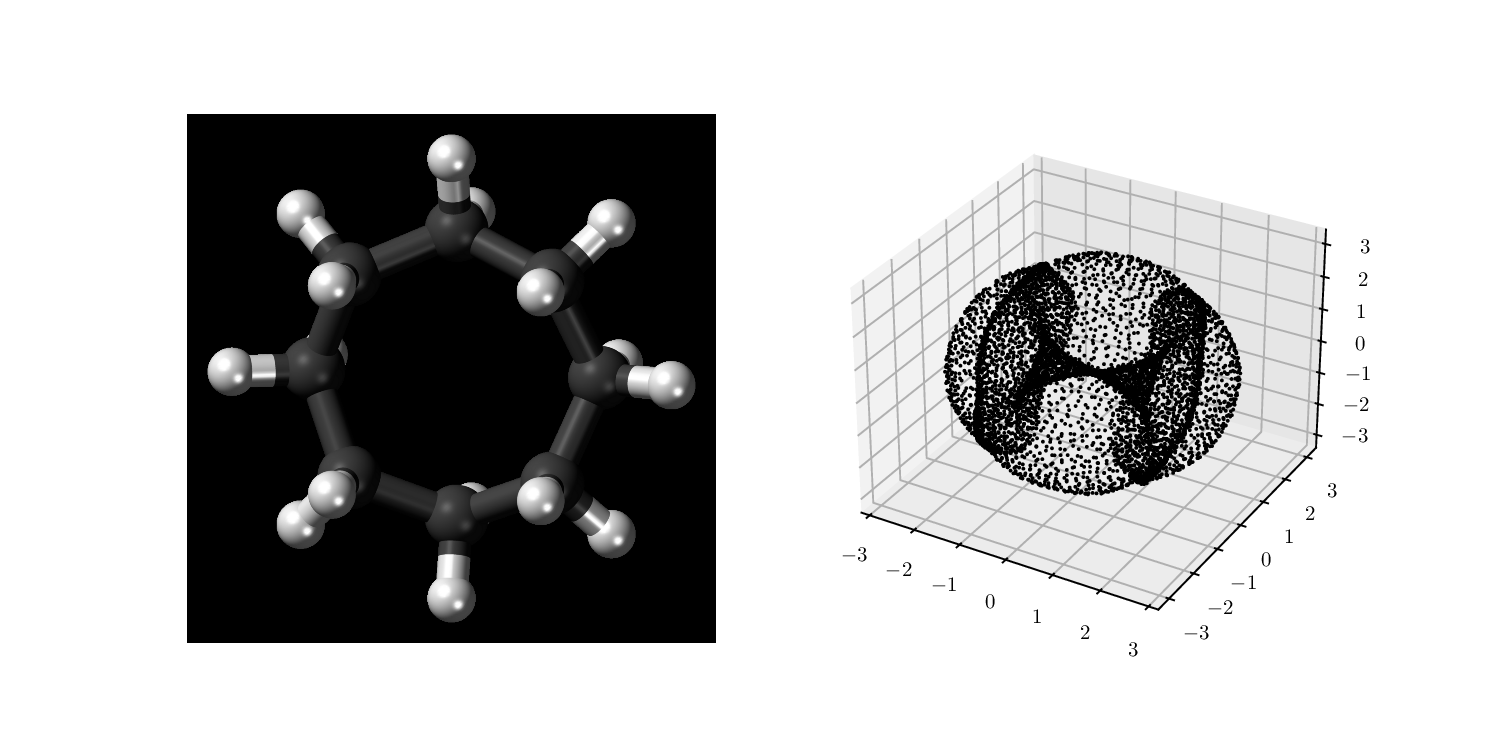}
    \caption{Molecular structure of cyclooctane (\(\text{C}_8\text{H}_{16}\)) and a point cloud sampled from its conformation space projected into \(\mathbb{R}^3\). The conformation space consists of a Klein bottle and a 2-sphere intersecting along two disjoint circles.}
    \label{fig:cyclooctane_conformation}
\end{figure}

Following \citet{lupo2022persistence}, we started with 6040 points in \(\mathbb{R}^{24}\) sampled from the conformation space of cyclooctane, originally introduced by \citet{martin2010topology}. \citet{stolz2020geometric} applied a method based on local persistent cohomology to identify and remove 627 singular points. To further refine the structure, \citet{lupo2022persistence} used the HDBSCAN clustering algorithm \citep{campello2013density} to segment the remaining points into four clusters: one corresponding to an open subset of the Klein bottle and the other three forming subsets of \( S^2 \). Following these processes, we obtained a set of points sampled from the Klein bottle component of the conformation space.

\begin{figure}[ht]
    \centering
    \includegraphics[width=\textwidth]{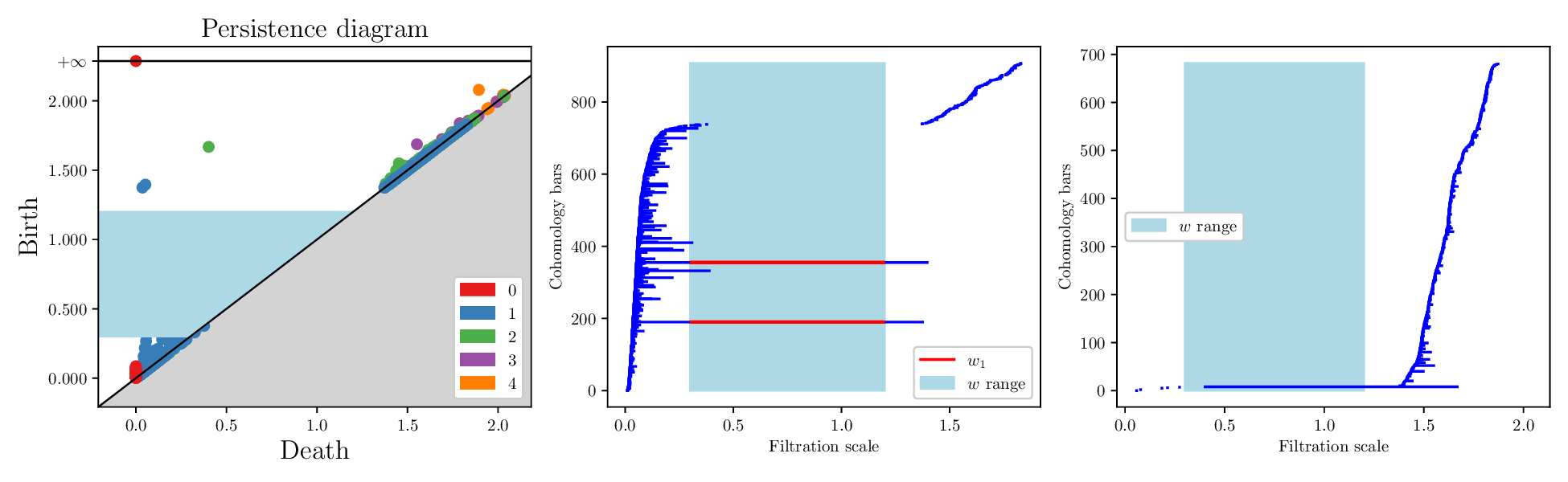}
    \caption{Persistent cohomology and persistent Stiefel–Whitney classes of dimension 2 for the point cloud sampled from the Klein bottle in the conformation space of cyclooctane. The leftmost plot shows the persistent cohomology diagram over $\mathbb{Z}/2$. The middle and rightmost plots illustrate the first and second persistent cohomology bars and the persistent Stiefel–Whitney classes of dimension 2. The shaded regions indicate the filtration interval $[0.3,1.2]$ over which these classes satisfy the persistent Wu criterion.}
    \label{fig:sw_klein}
\end{figure}

We selected $500$ points from the Klein bottle component by farthest-point sampling and applied the UMAP algorithm \citep{mcinnes2018umap} to embed them into $\mathbb{R}^6$. Figure~\ref{fig:sw_klein} presents the persistent cohomology of the alpha filtration constructed from the point cloud in $\mathbb{R}^6$, together with the persistent Stiefel–Whitney classes of dimension $2$ computed over the filtration interval $[0.3,1.2]$. The first persistent Stiefel–Whitney class corresponds to the sum of two nontrivial cohomology classes, in agreement with the nonorientability of the Klein bottle. The second persistent Stiefel–Whitney class is trivial. These results agree with the known Stiefel–Whitney classes of the Klein bottle. As with the image-patch data, the barcode also contains short-lived bars produced by sampling noise. The persistent Wu criterion holds throughout $[0.3,1.2]$ despite these additional classes. This behavior is again consistent with the noise-tolerance mechanism of Theorem~\ref{thm:wedge}.

\section{Conclusion}

We defined the persistent Wu classes and the persistent Stiefel–Whitney classes of dimension $n$ of a filtration, working intrinsically through the cup product and the Steenrod squares on its persistent cohomology, without a classifying map, tangent-space estimation, or smooth structure. We proved that these classes represent the Stiefel–Whitney classes of a closed $n$-dimensional smooth manifold $M$ when the top space is homotopy equivalent to $M$ and the filtration satisfies the hypotheses of Proposition~\ref{swequal} or Theorem~\ref{thm:wedge}. The latter result permits additional cohomology classes on which all positive Steenrod squares vanish. Because the Wu criterion is linear, the classes are obtained by solving a system of linear equations over $\mathbb{Z}/2$, which yields an algorithm whose running time is polynomial in the number of simplices. We evaluated the method on synthetic four-manifolds and on point clouds sampled from image patches and from a molecular conformation space.

Our algorithm computes only the Stiefel–Whitney classes, and extending it to other characteristic classes, such as the Euler or Chern classes, is a natural direction for future work. Our experiments use the Čech or alpha filtration rather than the Vietoris–Rips filtration. A Vietoris–Rips complex of a sufficiently dense sample of a closed manifold is also homotopy equivalent to that manifold \citep{latschev2001vietoris, kim2020homotopy}. Proposition~\ref{swequal} applies on any filtration interval over which the induced cohomology maps are isomorphisms.
 The obstruction is instead computational, as the Rips complex generates far more simplices than the alpha complex in a fixed ambient dimension, which makes the cup-product and Steenrod-square computations substantially more expensive. Developing efficient methods for these computations on the Rips filtration would be a worthwhile direction for future study.

\newpage
\bibliography{sn-bibliography}
\end{document}